\theoremstyle{plain}
\newtheorem{thm}{Theorem}[section]
\newtheorem{lemma}[thm]{Lemma}
\newtheorem{proposition}[thm]{Proposition}
\newtheorem{cor}[thm]{Corollary}
\theoremstyle{definition}
\theoremstyle{remark}
\newtheorem*{rem*}{Remark}
\newcommand{\R}{\mathbb{R}}
\newcommand{\Rp}{R^{\perp}}
\newcommand{\RR}{\mathbb{R}}
\newcommand{\N}{\mathbb{N}}
\renewcommand{\leq}{\leqslant}
\renewcommand{\geq}{\geqslant}
\newcommand{\norm}[1]{\Vert #1 \Vert}
\renewcommand{\k}{\mathbf{k}}
\newcommand{\ki}{\mathbf{k_1}}
\newcommand{\kii}{\mathbf{k_2}}
\renewcommand{\leq}{\leqslant}
\renewcommand{\geq}{\geqslant}
\def\({\left(}
\def\){\right)}
\def\[{\left[}
\def\]{\right]}
\def\<{\langle}
\def\>{\rangle}
\def \lm {\lambda}
\def \eps {\varepsilon}
\title{Uniform pointwise asymptotics  of solutions to quasi-geostrophic equation}
\author[T. Jakubowski]{Tomasz Jakubowski}
\address{Faculty of Pure and Applied Mathematics,
Wroc\l aw University of Science and Technology,
Wyb. Wyspia\'nskiego 27, 50-370 Wroc\l aw, Poland}
\email{tomasz.jakubowski@pwr.edu.pl}
\author[G. Serafin]{Grzegorz  Serafin}
\email{grzegorz.serafin@pwr.edu.pl}
\thanks{The paper is partially supported by the NCN grant 2015/18/E/ST1/00239}
\subjclass[2010]{35B40; 35K55; 35S10}
\keywords{Fractional Laplacian, Quasi-geostrophic equation, Pointwise estimates}
\begin{document}

 \begin{abstract}
 We provide  two-sided pointwise estimates and uniform asymptotics  of the solutions to the subcritical quasi-geostrophic equation with initial data in $L^{2/(\alpha-1)}(\RR^2)$. Furthermore, we give upper bound of similar type for any derivative of the solutions. Initial data in 
% $L^{2}(\RR^2)$ or
  $L^{p}(\RR^2)$, $p>2/(\alpha-1)$, are also discussed.
 \end{abstract}

\maketitle

 \section{Introduction}

 In this paper we study the two-dimensional dissipative quasi-geostrophic equation 
\begin{align}\label{q-g}
\left\{
\begin{array}{l}
\theta_t+R^\perp\cdot\nabla\theta+(-\Delta)^{\alpha/2}\theta=0,\\
\theta(0,x)=\theta_0(x).
\end{array}\right.
\end{align}
Here, $R^\perp=(-R_2,R_1)$, where $R=(R_1,R_2)$ is the  two-dimensional Riesz transform given by 
$R_i=\frac{\partial}{\partial x_i}(-\Delta)^{-1/2}\theta$, $i\in\{1,2\}$. Throughout the paper we assume $\alpha\in(1,2)$ and $\theta$ is a mild solution to the initial value problem \eqref{q-g}, that is $\theta$ satisfies the following equation,
\begin{align}\label{eq:Duhamel}
\theta(t,\cdot)(x)&=P_t\theta_0(x)+\int_0^t\int_{\R^2}\nabla p(t-s,x-y)\cdot R^\perp\theta(s,y)\theta(s,y)dy\,ds.
\end{align}

For $\alpha=1$, the two-dimensional quasi-geostrophic equation is the analogue
of the 3D Navier-Stokes equation and solutions to both equations admit similar behaviour \cite{CW2}. The case $\alpha=1$ is therefore called critical exponent, while $\alpha\in(1,2)$ are subcritical exponents.

Solutions to the two-dimensional dissipative quasi-geostrophic equation  model several phenomena (see \cite{CMT, P}) and have been intensively studied for more than the last two decades. In 1995, Resnick \cite{R} proved existence of strong solutions for $\theta_0\in L^2(\RR^2)$ as well as the maximum principle
\begin{equation}\label{eq:1}
\|\theta(t,\cdot)\|_{L^p}\le \|\theta_0\|_{L^p}, 
\end{equation}
where $t\geq0$ and $1<p\le\infty$. This inequality has been improved in several directions  by deriving a precise decay rate of $\|\theta(t,\cdot)\|_{L^p}$, see e.g. \cite{CW, CC, CF, J,  NS1, NS2}. In \cite{CF} authors considered the initial condition $\theta_0\in L^p$ with $p\geq \frac2{\alpha-1}$ and obtained many interesting bounds  for $L^q$, $q\geq p$, norms of mild solutions to \eqref{q-g}. In particular, they showed that for $\theta_0\in L^{\frac2{\alpha-1}}$ and any  multi-index $\k\in\N^2$ 
\begin{align}
\lim_{t\rightarrow\infty} t^{\frac{|\k|}{\alpha}+\frac d\alpha\(\frac{\alpha-1}2-\frac1q\)}\|\nabla^\k\theta(t,\cdot)\|_q=0.
\end{align}
Under additional assumption $\theta_0\in L^1$, for every $\beta\in [0,\frac1\alpha)$ there is $C>0$ such that 
\begin{align*}
\|\nabla^\k\theta(t,\cdot)-\nabla^\k(P_t\theta_0)\|_q\le C\, t^{-\frac{|k|}{\alpha}-\frac d\alpha\(1-\frac1q\)-\beta},
\end{align*}
where $(P_t)_{t\geq0}$ is the semi-group generated by $\Delta^{\alpha/2}$.

Although all of the aforementioned results provide precise bounds for $L^p$ norms of the solutions, they do not say much about pointwise behaviour of these solutions. In particular, there are not known any 
lower bounds. In fact, this is rather common problem in the theory of nonlinear differential equations. Nevertheless, in this paper, we solve it for the dissipative quasi-geostrophic equation with nonnegative $\theta_0\in L^{\frac{2}{\alpha-1}}$ by giving two-sided pointwise estimates as well as some uniform asymptotics of mild  solutions. The main results of the paper are stated in the following theorems.
 \begin{thm}\label{thm:maincrit}Let $\theta_0\in L^{\frac{2}{\alpha-1}}(\RR^2)$ be nonnegative. There is a constant $C=C(\theta_0,\alpha)>1$ such that 
$$\frac1C P_t\theta_0(x)\le \theta(t,x)\le CP_t\theta_0(x).$$ 
\end{thm}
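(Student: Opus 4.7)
I would work directly with the Duhamel formula \eqref{eq:Duhamel}, decomposing $\theta(t,x) = P_t\theta_0(x) + \Psi[\theta](t,x)$ with
\begin{align*}
\Psi[\theta](t,x) := \int_0^t\!\!\int_{\R^2}\nabla p(t-s,x-y)\cdot R^\perp\theta(s,y)\,\theta(s,y)\,dy\,ds,
\end{align*}
and reducing the theorem to a pointwise comparison of the form $|\Psi[\theta](t,x)|\leq \eta\,P_t\theta_0(x)$ with some $\eta\in(0,1)$. Since $\theta_0\geq 0$ implies $P_t\theta_0\geq 0$ pointwise, such a comparison would immediately yield both the upper bound $\theta\leq (1+\eta)P_t\theta_0$ and the lower bound $\theta\geq(1-\eta)P_t\theta_0$.

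\textbf{Key tools.} The argument will rest on three standard ingredients: (i) the classical pointwise bounds for the two-dimensional $\alpha$-stable density, $p(t,z)\asymp t^{-2/\alpha}\wedge t|z|^{-2-\alpha}$ together with $|\nabla p(t,z)|\leq c\,(t^{1/\alpha}+|z|)^{-1}p(t,z)$; (ii) the $L^q$-decay estimates for $\theta(s,\cdot)$ and $\nabla^{\k}\theta(s,\cdot)$ from \cite{CF}, together with the $L^q$-boundedness of the Riesz transform $R^\perp$ for $1<q<\infty$; and (iii) the semigroup identity $\int p(t-s,x-y)P_s\theta_0(y)\,dy=P_t\theta_0(x)$, which is the natural target of the comparison we want to establish.

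\textbf{Execution.} The verification of $|\Psi[\theta]|\lesssim P_t\theta_0$ will be carried out through a Picard iteration $\theta^{(0)}:=P_t\theta_0$, $\theta^{(n+1)}:=P_t\theta_0+\Psi[\theta^{(n)}]$, coupled with the inductive pointwise bound $|\theta^{(n)}(t,x)|\leq M\,P_t\theta_0(x)$. Using (i) and the induction hypothesis, the inductive step reduces to bounding
\begin{align*}
\int_0^t\!\!\int_{\R^2}\frac{p(t-s,x-y)}{(t-s)^{1/\alpha}+|x-y|}\,|R^\perp\theta^{(n)}(s,y)|\,P_s\theta_0(y)\,dy\,ds
\end{align*}
by a fixed multiple $\tfrac{M-1}{cM^2}\,P_t\theta_0(x)$, independent of $n$. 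Passing to the limit $n\to\infty$ in the $L^p$-topology for $p\geq 2/(\alpha-1)$ and invoking uniqueness of mild solutions then transfers the bound to $\theta$ itself.

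\textbf{Main obstacle.} The hard part is the nonlocality of $R^\perp$, which prevents any naive pointwise comparison $|R^\perp\theta^{(n)}(s,y)|\lesssim P_s\theta_0(y)$; instead one only controls $R^\perp\theta^{(n)}$ in $L^q$. I would dispatch this by splitting the $y$-integration at the scale $|y-x|\sim(t-s)^{1/\alpha}$: in the outer region H\"older's inequality together with the $L^q$-decay of $R^\perp\theta^{(n)}$ provides enough integrability, while in the inner region a 3P-type inequality for $p$ combined with (iii) absorbs the nonlocal factor into $P_t\theta_0(x)$. A secondary technical difficulty is keeping the constant $M$ independent of $t\in(0,\infty)$: for small $t$ this is secured by the smallness of the time integral, whereas for large $t$ it follows from the decay factor $\omega(s)\to 0$ provided by \cite{CF}. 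Once both regimes are combined, the lower bound is immediate from the same pointwise inequality.
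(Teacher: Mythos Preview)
The paper's route is quite different from yours. For the upper bound it does not iterate Duhamel at all: it freezes $v=R^\perp\theta$ as a given divergence-free drift, verifies that $v$ lies in the Morrey/Kato-type class required by Maekawa--Miura \cite{2013-YM-HM-jfa}, and imports their heat-kernel upper bound $\tilde p(t,x,y)\le c\,p(t,x-y)$ for the linear equation $\partial_tu=\Delta^{\alpha/2}u+v\cdot\nabla u$ (Lemma~\ref{lem:uppercrit}). For the lower bound it first proves Theorem~\ref{thm:limt}, namely $\theta/P_t\theta_0\to1$ as $t\to0$, $t\to\infty$, or $|x|\to\infty$, and then invokes continuity and positivity of both $\theta$ and $P_t\theta_0$ on the remaining compact set $[t_0,T]\times\overline{B(0,R)}$.

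Your plan has a genuine gap at the lower bound: you need $|\Psi[\theta](t,x)|\le\eta\,P_t\theta_0(x)$ with a \emph{uniform} $\eta<1$, but you only argue smallness for $t$ small and $t$ large. For intermediate $t$, even the sharpest estimate your ingredients can give---namely $|\Psi[\theta]|\lesssim P_t\theta_0\int_0^t(t-s)^{-1/\alpha}\|R^\perp\theta(s,\cdot)\|_\infty\,ds$, obtained after inserting $|\theta|\lesssim P_s\theta_0$ and using the semigroup identity---produces a constant of size $C(\theta_0)\,\mathcal B\bigl(\tfrac{\alpha-1}{\alpha},\tfrac1\alpha\bigr)$ with no mechanism to push it below $1$. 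The same obstruction prevents your Picard step from closing for the upper bound: the inductive inequality reads $|\theta^{(n+1)}|\le\bigl(1+cMK\,\mathcal B(\tfrac{\alpha-1}{\alpha},\tfrac1\alpha)\bigr)P_t\theta_0$ (with $K$ the constant in $\|R^\perp\theta^{(n)}(s,\cdot)\|_\infty\le K s^{-(\alpha-1)/\alpha}$), and this recovers $|\theta^{(n+1)}|\le M P_t\theta_0$ only under a smallness condition on $\theta_0$ that you have not assumed. Your proposed inner/outer splitting with H\"older and a 3P-type inequality does not repair this, since H\"older in the outer region yields a bound uniform in $x$, not one pointwise comparable to $P_t\theta_0(x)$. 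This is exactly why the paper off-loads the upper bound to the linear theory of \cite{2013-YM-HM-jfa} and treats the lower bound on the compact ``bad'' region by continuity rather than by forcing $\eta<1$ globally.
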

If we remove the nonnegativity condition, the upper bound $\theta(t,x)\le CP_t|\theta_0|$ holds (see Theorem \ref{thm:gradest}). Note that the semi-group $P_t$ and its kernel $p(t,x)$ are well known objects (see Section \ref{sec:stable} for the details).
\begin{thm}\label{thm:limt} For nonnegative $\theta_0\in L^{\frac2{\alpha-1}}(\RR^2)$, we have
\begin{align}\label{eq:limt}
	\lim_{t\rightarrow0}\left\|\frac{\theta(t,\cdot)}{P_t\theta_0}-1\right\|_{\infty}=\lim_{t\rightarrow\infty}\left\|\frac{\theta(t,\cdot)}{P_t\theta_0}-1\right\|_{\infty}=\lim_{|x|\rightarrow\infty}\sup_{t>0}\left|\frac{\theta(t,\cdot)}{P_t\theta_0}-1\right|=0.
\end{align}
\end{thm}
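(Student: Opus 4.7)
My plan is to bound the nonlinear remainder $N(t,x):=\theta(t,x)-P_t\theta_0(x)$, read off from the Duhamel identity \eqref{eq:Duhamel}, by a small multiple of $P_t\theta_0(x)$ in each of the three regimes. The master pointwise estimate combines Theorem~\ref{thm:maincrit} (so that $0\le\theta(s,\cdot)\le C\,P_s\theta_0$), an analogous pointwise bound $|R^\perp\theta(s,\cdot)|\le C\,P_s\theta_0$ (derived from $|R^\perp p(t,\cdot)|\le C\,p(t,\cdot)$ iterated through the Duhamel formula, in the spirit of Theorem~\ref{thm:maincrit}; a Riesz-type companion to Theorem~\ref{thm:gradest}), and the standard estimate $|\nabla p(t-s,z)|\le C(t-s)^{-1/\alpha}p(t-s,z)$. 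Using $P_s\theta_0(y)^2\le\|P_s\theta_0\|_\infty\,P_s\theta_0(y)$ together with the semigroup identity $P_{t-s}P_s=P_t$, I obtain
\[
|N(t,x)|\ \le\ C\,P_t\theta_0(x)\int_0^t (t-s)^{-1/\alpha}\|P_s\theta_0\|_\infty\,ds.
\]

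Set $\delta(s):=s^{(\alpha-1)/\alpha}\|P_s\theta_0\|_\infty$. I claim $\delta(s)\to 0$ as $s\to 0^+$ and as $s\to\infty$. Density of $C_c^\infty$ in $L^{2/(\alpha-1)}(\RR^2)$ lets me decompose $\theta_0=\phi+\psi$ with $\phi$ bounded of compact support and $\|\psi\|_{2/(\alpha-1)}<\eps$. For $s\to 0$ use $\|P_s\phi\|_\infty\le\|\phi\|_\infty$; for $s\to\infty$ use $\|P_s\phi\|_\infty\le Cs^{-2/\alpha}\|\phi\|_1$; in both regimes $s^{(\alpha-1)/\alpha}\|P_s\psi\|_\infty\le C\eps$ by the $L^{2/(\alpha-1)}\to L^\infty$ estimate. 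Since $\eps$ is arbitrary, $\delta(s)\to 0$ in both limits.

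The first two limits in \eqref{eq:limt} then drop out from
\[
\Bigl\|\tfrac{N(t,\cdot)}{P_t\theta_0}\Bigr\|_\infty\ \le\ C\int_0^t (t-s)^{-1/\alpha}s^{-(\alpha-1)/\alpha}\delta(s)\,ds.
\]
For $t\to 0^+$ I dominate $\delta(s)\le\sup_{s\le t}\delta(s)$ and use that the remaining integral equals the $t$-independent beta value $B(1-1/\alpha,1/\alpha)$. For $t\to\infty$, given $\eta>0$ pick $M$ with $\delta(s)<\eta$ for $s\ge M$: the integral over $[M,t]$ is bounded by $C\eta$ by scale invariance, while the integral over $[0,M]$ is at most $(t/2)^{-1/\alpha}\int_0^M s^{-(\alpha-1)/\alpha}\delta(s)\,ds\to 0$.

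For the spatial limit I first reduce to $t\in[T_1,T_2]$ with fixed $0<T_1<T_2<\infty$, exploiting the two limits just proved. Split $\theta_0=\phi+\psi$ again, now with $\supp\phi\subset B_R$ and $\|\psi\|_{2/(\alpha-1)}<\eps$; the $\psi$-piece contributes $O(\eps)$ as before. For the $\phi$-piece use the sharper off-diagonal bound $|\nabla p(t-s,z)|\le C(t-s)/((t-s)^{1/\alpha}+|z|)^{3+\alpha}$ and split the $y$-integration into $\{|y|<|x|/2\}$, where $|x-y|>|x|/2$ yields explicit polynomial decay in $|x|$, and $\{|y|>|x|/2\}$, where the tail bound $P_s\phi(y)\le C\|\phi\|_1\,s/(s^{1/\alpha}+|x|)^{2+\alpha}$ applies. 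Dividing by the lower bound $P_t\theta_0(x)\ge c_{\theta_0}\,t/(t^{1/\alpha}+|x|)^{2+\alpha}$ (valid for $|x|$ large, since $\int_{B_R}\theta_0>0$ for some $R$ by the assumption $\theta_0\not\equiv 0$), both contributions are $o(1)$ uniformly for $t\in[T_1,T_2]$; sending $\eps\to 0$ closes the argument. The hardest step is this spatial one: the reduction to bounded $t$ is crucial to avoid the growing factor $t^{(\alpha-1)/\alpha}$, and one needs the finer off-diagonal decay of $\nabla p$ rather than the one-variable bound used in the first three steps. The other delicate point is securing the pointwise Riesz control $|R^\perp\theta|\le CP_s\theta_0$, which I expect to be obtained in parallel with (or within) Theorem~\ref{thm:gradest}.
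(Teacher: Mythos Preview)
Your argument rests on the pointwise inequality $|R^\perp\theta(s,y)|\le C\,P_s\theta_0(y)$, and this is the genuine gap: the bound is false. The Riesz kernel decays only like $|z|^{-2}$, so convolution with it cannot preserve the $|x|^{-2-\alpha}$ tail of $p(s,\cdot)$. Proposition~\ref{prop:estRp} gives precisely $|R^\perp p(s,x)|\lesssim (s^{1/\alpha}+|x|)^{-2}$, and this is sharp; since $p(s,x)\approx s(s^{1/\alpha}+|x|)^{-2-\alpha}$, the inequality $|R^\perp p|\le C\,p$ that you wish to iterate already fails. More concretely, for compactly supported $\theta_0\ge 0$ with $\int\theta_0>0$ one has $P_s\theta_0(x)\sim s|x|^{-2-\alpha}$ whereas $|R^\perp\theta(s,x)|\sim |x|^{-2}$ as $|x|\to\infty$, so the ratio is unbounded. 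Consequently your master estimate, and with it the $\phi/\psi$ decomposition in the spatial step (which relies on bounding the integrand by $C(P_s\theta_0)^2$), both break down.

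What does go through is the version using the pointwise bound only on $\theta$ (Lemma~\ref{lem:uppercrit}) and merely the \emph{sup-norm} of $R^\perp\theta$:
\[
|N(t,x)|\ \le\ C\,P_t\theta_0(x)\int_0^t(t-s)^{-1/\alpha}\|R^\perp\theta(s,\cdot)\|_\infty\,ds,
\]
which is the paper's \eqref{eq1:limt}. The relevant $\delta(s)$ is then $s^{(\alpha-1)/\alpha}\|R^\perp\theta(s,\cdot)\|_\infty$, and showing that it vanishes at $0$, at $\infty$, and (pointwise in $x$) at spatial infinity is not an elementary density-in-$L^{2/(\alpha-1)}$ argument: it requires a separate Duhamel bootstrap for $R^\perp\theta$ (Proposition~\ref{prop:Rtheta-lim}), built on the slow-decay estimate of Proposition~\ref{prop:estRp} for $R^\perp\nabla p$. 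The spatial limit in the paper then uses the third assertion of Proposition~\ref{prop:Rtheta-lim} directly, splitting the $y$-integral at a radius beyond which $R^\perp\theta(s,y)$ is uniformly small, rather than decomposing $\theta_0$.
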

Finally, we complete these results  by establishing upper bounds for derivatives of the solutions: 
\begin{thm}\label{thm:gradest}For $\theta_0\in L^{\frac2{\alpha-1}}$ and any multi-index $\k\in\N\times\N$, there is $C=C(\theta_0,\k,\alpha)>0$ such that
\begin{align}\label{eq:gradest}
\left|\nabla^\k \theta(t,x)\right|\le  C t^{-|\k|/\alpha}P_t|\theta_0|(x).
\end{align}
\end{thm}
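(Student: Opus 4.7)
The plan is to establish~\eqref{eq:gradest} by induction on the order $|\k|$. The base case $|\k|=0$ amounts to the signed bound $|\theta(t,x)|\leq CP_t|\theta_0|(x)$, which is the extension of the upper estimate in Theorem~\ref{thm:maincrit} noted in the remark immediately after that theorem. I therefore assume \eqref{eq:gradest} has been established for every multi-index of order strictly less than $|\k|$ and aim to prove it for $\k$. The basic analytic input is the pointwise kernel bound
\[
|\nabla^{\mathbf{j}}p(t,x)|\leq C_{\mathbf{j}}\,t^{-|\mathbf{j}|/\alpha}p(t,x),\qquad \mathbf{j}\in\N^2,
\]
which follows from the scaling and smoothness of the two-dimensional $\alpha$-stable density described in Section~\ref{sec:stable}. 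Applied to the linear part of~\eqref{eq:Duhamel} this already yields $|\nabla^{\k}P_t\theta_0(x)|\leq C\,t^{-|\k|/\alpha}P_t|\theta_0|(x)$, so only the nonlinear integral
\[
I(t,x):=\int_0^t \bigl(\nabla^{\k}\nabla p\bigr)(t-s,\cdot)*\bigl(R^{\perp}\theta\cdot\theta\bigr)(s,\cdot)(x)\,ds
\]
remains to be controlled, and I split it at $s=t/2$.

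On $[0,t/2]$, where $t-s\geq t/2$, all $|\k|+1$ derivatives stay on the kernel: the estimate $|\nabla^{\k}\nabla p(t-s,\cdot)|\leq Ct^{-(|\k|+1)/\alpha}p(t-s,\cdot)$, combined with the $|\k|=0$ bound on $|\theta|$, a parallel pointwise bound on $|R^{\perp}\theta(s,y)|$ obtained by applying the same scheme to the Duhamel representation of $R^{\perp}\theta$, and the semigroup identity $p(t-s)*P_s=P_t$, produces the target bound $Ct^{-|\k|/\alpha}P_t|\theta_0|(x)$. On $[t/2,t]$ the singularity $(t-s)^{-(|\k|+1)/\alpha}$ is non-integrable as soon as $|\k|\geq 1$, so I would integrate by parts in $x$ to shift $|\k|$ derivatives from $p$ onto the nonlinearity, retaining only one derivative on $p$ and hence the integrable factor $(t-s)^{-1/\alpha}$ (the subcriticality $\alpha>1$ is essential here). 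The Leibniz rule then expands
\[
\nabla^{\k}\bigl(R^{\perp}\theta\cdot\theta\bigr)=\sum_{\mathbf{l}\leq\k}\tbinom{\k}{\mathbf{l}}\,R^{\perp}\nabla^{\mathbf{l}}\theta\cdot\nabla^{\k-\mathbf{l}}\theta,
\]
and every intermediate term with $0<|\mathbf{l}|<|\k|$ is controlled directly by the inductive hypothesis together with the parallel bound on $R^{\perp}\nabla^{\mathbf{l}}\theta$.

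The principal obstacle I foresee are the two extremal terms $R^{\perp}\theta\cdot\nabla^{\k}\theta$ and $R^{\perp}\nabla^{\k}\theta\cdot\theta$, which still carry a $|\k|$-th derivative of $\theta$ and, treated naively, feed back into the quantity being bounded. I would close this loop by introducing the normalized quantity
\[
M_{\k}(t):=\sup_{x\in\R^2}\,\frac{t^{|\k|/\alpha}|\nabla^{\k}\theta(t,x)|}{P_t|\theta_0|(x)}
\]
and combining the single-derivative kernel estimate, the $L^\infty$-smoothing $\|P_s\theta_0\|_\infty\lesssim s^{-(\alpha-1)/\alpha}$ available for $\theta_0\in L^{2/(\alpha-1)}$, and the explicit evaluation $\int_{t/2}^{t}(t-s)^{-1/\alpha}\,ds\asymp t^{1-1/\alpha}$ to arrive at a self-referencing inequality of the form $M_{\k}(t)\leq A+B\sup_{s\leq t}M_{\k}(s)$ whose prefactor $B$ can be made strictly less than $1$, either by first treating data of small norm and using the time-translation invariance of~\eqref{q-g} to restart from arbitrary positive time, or by refining the time decomposition near $s=t$. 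Absorbing the top-derivative contribution into the left-hand side then closes the induction and yields~\eqref{eq:gradest}.
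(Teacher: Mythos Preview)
Your inductive skeleton—splitting the Duhamel integral at $t/2$, keeping all derivatives on the kernel on $[0,t/2]$, and distributing them via Leibniz on $[t/2,t]$—is exactly what the paper does. The genuine gap is in the closing step for the extremal term $R^\perp\theta\cdot\nabla^{\k}\theta$. In your self-referencing inequality $M_{\k}(t)\leq A+B\sup_{s\leq t}M_{\k}(s)$, the prefactor $B$ is proportional to the constant in $\|R^\perp\theta(s,\cdot)\|_\infty\leq Cs^{-(\alpha-1)/\alpha}$, and that constant is \emph{not} small for general data: the scale-invariant norm $\|\theta(t,\cdot)\|_{2/(\alpha-1)}$ does not decay (the exponent in \eqref{eq:thetaLpnorm} vanishes at $p=\tfrac{2}{\alpha-1}$), so restarting from $\theta(T,\cdot)$ at a late time $T$ buys no smallness. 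Nor does ``refining the decomposition near $s=t$'' help, since $\int_{t/2}^{t}(t-s)^{-1/\alpha}s^{-(\alpha-1)/\alpha}\,ds$ is of order one with no free parameter. The paper's actual mechanism is Proposition~\ref{prop:Rtheta-lim}: the quantity $s^{(\alpha-1)/\alpha}|R^\perp\theta(s,y)|$ falls below any prescribed $\varepsilon$ once $(s,y)$ lies outside a compact set $(t_1,t_2)\times B(0,R)$; the contribution from that compact set is handled separately by Lemma~\ref{lem:bbPtheta}, and the remaining self-referencing piece then carries the arbitrarily small factor $\varepsilon$, absorbed via the pointwise operator inequality of Lemma~\ref{lem:s-ie}.

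There is a second, more technical issue: your ratio $M_{\k}(t)=\sup_{x}t^{|\k|/\alpha}|\nabla^{\k}\theta(t,x)|/P_t|\theta_0|(x)$ is not a priori finite, since the available bound on $\nabla^{\k}\theta$ from \eqref{eq:thetaLp} is only $L^\infty$ in $x$ while $P_t|\theta_0|(x)\to 0$ as $|x|\to\infty$; so the absorption step is formal. The paper avoids this by working pointwise with the operator $T_\gamma$ of Lemma~\ref{lem:s-ie} rather than with a supremum ratio. Finally, you flag $R^\perp\nabla^{\k}\theta\cdot\theta$ as a second extremal term, but the paper disposes of it (and of all intermediate Leibniz terms) via the independent $L^\infty$ estimate $\|\nabla^{\k}R^\perp\theta(s,\cdot)\|_\infty\lesssim s^{-(|\k|+\alpha-1)/\alpha}$ of \eqref{eq:aux1}, proved beforehand from $L^p$ bounds; your ``parallel pointwise bound'' on $R^\perp$-quantities would need exactly this ingredient, because the Riesz transform does not preserve the pointwise profile $P_s|\theta_0|(y)$.
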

Note that $\nabla^\k  P_t\theta_0$ admits the same estimate (see \eqref{eq:estgradp}).
It turns out that the power $p=\frac2{\alpha-1}$ in the initial condition $\theta_0\in L^p$ is critical in some sense. One could  observe this phenomenon already in the paper \cite{CF}. Depending on $p$ is greater or less than $\frac2{\alpha-1}$, different difficulties occur and different behaviour of solutions is expected. Similar situation appears in the fractal Burgers equation, which has been studied by the authors in \cite{JS1, JS2} in case of (not only) critical power of the nonlinear drift term.  The methods developed there have been improved and adapted to the quasi-geostrophic equation. Nevertheless,  some ideas come from theory of linear perturbations of fractional Laplacian (see e.g. \cite{2007-KB-TJ-cmp, 2010-TJ-KS-jee}). In fact, the upper bound in \eqref{eq:1} is concluded from \cite{2013-YM-HM-jfa}, where also   linear equations have been considered.

The paper is organized as follows. In Section 2 we gather some properties of the heat kernel $p(t,x)$ for $\Delta^{\alpha/2}$ as well as some  basic facts and initial results for Riesz transform. Section 3 is devoted to estimates and asymptotics of solutions to (\ref{q-g}), while in Section 4 we prove the bound for theirs derivatives. 

Throughout the paper, we write $f\approx g$ ($f \lesssim g$ respectively) for $f,g\ge0$, if there
is a constant $c=c(\alpha,\theta_0)\ge1$ such that $c^{-1}f\le g\le c f$   ($f \le c g$ respectively) on their
common domain. The constants $c, C, c_i$, whose exact values are
unimportant, may be changed in each statement and proof. 
%Let $B(x,r)$ be the open ball with center $x\in \RR^d$ and radius $r>0$. 
As usual, we write $a  \land b := \min(a,b)$ and $a \vee b := \max(a,b)$.
 \section{Preliminaries}

\subsection{Stable semigroup}
\label{sec:stable}
Throughout the paper we consider $\alpha \in (1,2)$. In this section we recall some well known results on the stable semigroup.  Let
\begin{equation}\label{e:levymea}
\nu(z)
=\frac{ \alpha 2^{\alpha-1}\Gamma\big(1 +\frac{\alpha}{2}\big)}{\pi \Gamma\big(1-\frac{\alpha}{2}\big)}|z|^{-2-\alpha}\,,\quad z\in \RR^2.
\end{equation}
For (smooth and compactly supported) test function $\varphi\in C^\infty_c(\RR^2)$, we define the fractional Laplacian  by
\begin{equation*}
  \Delta^{\alpha/2}\varphi(x):=-(-\Delta)^{\alpha/2}\varphi(x)=
  \lim_{\varepsilon \downarrow 0}\int_{\{|z|>\varepsilon\}}
  \left[\varphi(x+z)-\varphi(x)\right]\nu(z)\,dz\,,
  \quad
  x\in \RR^2\,.
\end{equation*}
In terms of the Fourier transform,
$\widehat{\Delta^{\alpha/2}\varphi}(\xi)=-|\xi|^{\alpha}\widehat{\varphi}(\xi)$.
Denote by $p(t,x-y)$ the fundamental solution of the equation $\partial_t u = \Delta^{\alpha/2}u$, that is, for fixed $y \in \R^2$, $u(t,x) = p(t,x-y)$ solves
$$\begin{cases}
\partial_t u = \Delta^{\alpha/2} u, &\qquad t>0, x \in \RR^2, \\
u(0,x) = \delta_y(x), &\qquad x \in \RR^2.	
\end{cases}
$$
It is well known that $p(\cdot,\cdot) \in C^\infty((0,\infty) \times {\RR^2})$ and $p(t,x)=p(t,-x)$ for any $t >0$ and $x\in \RR^2$. It also enjoys the following scaling and semi-group properties
\begin{align*}
p(t,x)&=t^{-2/\alpha}p(1,t^{-1/\alpha} x),\quad t>0,\; x\in \RR^2,\\
p(t,x)&=\int_{\RR^2}p(t-s, x-z) p(s,z) dz, \quad t>s>0,\; x\in \RR^2,
\end{align*}
%
%We note that $p(t,x,y)$ is a function of $t$ and $x-y$, so
%sometimes we also write it as $p(t,x-y)$, i.e. $p(t,x,y) =
%p(t,x-y)$.
%\begin{align*}
%p(t,x,y) := p(t,x-y)
%\end{align*}
and pointwise estimates,
\begin{equation}\label{eq:estp}
p(t,x) \approx \frac{t}{\(t^{1/\alpha}+|x|\)^{2+\alpha}} \approx t^{-2/\alpha} \land \frac{t}{|x|^{2+\alpha}},\quad t>0,\; x,y\in \RR^2.
\end{equation}
For any multindex $\k = (k_1,k_2) \in \mathbb{N}^2$ we denote
$$
 \nabla^\k f(x) = \frac{\partial^{|\k|}}{\partial x_1^{k_1}\partial x_2^{k_2}} f(x), \qquad x = (x_1,x_2) \in \RR^2,
$$
where $|\k| = k_1 + k_2$. By scaling property  and \cite[Lemma 3.1]{PSz-2010-mn} (see also \cite{KK-PS-2015-jmaa, TG-KS-preprint} for more general setting), 
\begin{equation}\label{eq:estgradp}
|\nabla^{\k} p(t,x)|\le c_\k t^{-\frac{|\k|}{\alpha}} p(t,x), \quad t>0,\; x,y\in \RR^2.
\end{equation}
From \eqref{eq:estgradp} we easily get the $L^p$-estimates:
\begin{equation}\label{eq:lpestp}
\|\nabla^{\k} p(t,\cdot)\|_p\lesssim t^{-\frac 2\alpha\(1-\frac1p\)-\frac{|\k|}{\alpha}}.
\end{equation}
%\begin{lemma}
We denote by $P_t$ the stable semigroup operator,
$$
P_t f(x) = \int_{\RR^2} p(t,x-y) f(y) dy, \qquad t>0,\, x \in \RR^2.
$$
The name 'stable' comes from an $\alpha$-stable process, which is  generated by  $\Delta^{\alpha/2}$  and the semigroup describes transition of probabilities
(see, e.g. \cite{MR2569321, 2010-KB-TG-MR-ap}). 
For $f\in L^{\frac2{\alpha-1}}(\RR^2)$ and  $p\in[\frac2{\alpha-1},\infty]$, the following estimate holds (\cite{W}),  
\begin{equation}\label{eq:PtfLp}
\|P_tf\|_p\lesssim t^{-\frac{(\alpha-1)}{\alpha} +\frac2{\alpha p}}\norm{f}_{\frac{2}{\alpha-1}}.
\end{equation}
In the lemma below, we note some additional decay properties.
%\end{lemma}
%\begin{proof}
%By Young inequality and \eqref{eq:lpestp} with $k=0$,
%\begin{align*}
%\norm{P_t f}_p \le \norm{p(t,\cdot)}_{\frac{2p}{2+p(3-\alpha)}} \norm{f}_{\frac{2}{\alpha-1}} \lesssim t^{-\frac{(\alpha-1)}{\alpha}\(1-\frac2{(\alpha-1)p}\)}\norm{f}_{\frac{2}{\alpha-1}}.
%\end{align*}
%
%%By H\"older inequality we have
%%\begin{align*}
%%|P_tf|&=\Big|\int_{\RR^d}p(t,x,y)f(y)dy\Big|\le \|p(t,\cdot)\|_{\frac{2}{3-\alpha}}\|f\|_{\frac{2}{\alpha-1}}\lesssim t^{-d(\alpha-1)/2\alpha}\|f\|_{\frac{2}{\alpha-1}}.
%%\end{align*}
%%Furthermore, $P_t$ is a contraction on every $L^{p_0}$ and the thesis follows from the interpolation argument.
%\end{proof}
\begin{lemma}
For  $f\in L^{\frac{2}{\alpha-1}}(\RR^2)$, we have 
\begin{align}
\lim_{t\rightarrow0}\|t^{\frac{\alpha-1}\alpha}P_tf\|_\infty&=0,  \label{eq:limPtf}\\
\lim_{t\rightarrow\infty}\|t^{\frac{\alpha-1}\alpha}P_tf\|_\infty &=0, \label{eq:limPtf2} \\
\lim_{|x|\rightarrow\infty}\sup_{t>0}\left|t^{\frac{\alpha-1}\alpha}P_tf(x)\right|&=0. \label{eq:limPtf3}
\end{align}
\end{lemma}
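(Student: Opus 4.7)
The plan is a standard density argument. The bound \eqref{eq:PtfLp} applied with $p=\infty$ reads $\|P_t h\|_\infty \lesssim t^{-(\alpha-1)/\alpha}\|h\|_{2/(\alpha-1)}$, so all three quantities $\|t^{(\alpha-1)/\alpha}P_t f\|_\infty$ are uniformly controlled by $\|f\|_{2/(\alpha-1)}$. To upgrade uniform boundedness to vanishing, I would fix $\varepsilon>0$, pick $g\in C_c^\infty(\RR^2)$ with $\|f-g\|_{2/(\alpha-1)}<\varepsilon$, and split $P_tf = P_t g + P_t(f-g)$. The remainder contributes at most $C\varepsilon$ to each of the three quantities \emph{uniformly in $t$ and $x$}, so everything reduces to checking the three limits for the well-behaved function $g$.

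For the first limit \eqref{eq:limPtf}, since $g$ is bounded, $\|P_t g\|_\infty\le\|g\|_\infty$, hence $t^{(\alpha-1)/\alpha}\|P_t g\|_\infty \to 0$ as $t\to 0^+$ because $(\alpha-1)/\alpha>0$. For the second limit \eqref{eq:limPtf2}, $g\in L^1$ together with the estimate $\|p(t,\cdot)\|_\infty\lesssim t^{-2/\alpha}$ from \eqref{eq:estp} gives
$$t^{(\alpha-1)/\alpha}\|P_tg\|_\infty \lesssim t^{(\alpha-1)/\alpha-2/\alpha}\|g\|_1 = t^{-(3-\alpha)/\alpha}\|g\|_1 \longrightarrow 0$$
as $t\to\infty$, since $\alpha<3$.

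The spatial decay \eqref{eq:limPtf3} is the only one requiring a small computation. Letting $R$ be such that $\supp g\subset B(0,R)$, for $|x|\ge 2R$ we have $|x-y|\ge|x|/2$ on $\supp g$, so
$$|P_tg(x)|\le \|g\|_1\sup_{|y|\le R}p(t,x-y),$$
and by \eqref{eq:estp},
$$t^{(\alpha-1)/\alpha}p(t,x-y)\approx \frac{t^{(2\alpha-1)/\alpha}}{(t^{1/\alpha}+|x-y|)^{2+\alpha}}.$$
A split into the regimes $t^{1/\alpha}\ge|x-y|$ and $t^{1/\alpha}\le|x-y|$ yields in both cases the bound $|x-y|^{\alpha-3}$; taking the supremum over $t>0$ gives $\sup_{t>0} t^{(\alpha-1)/\alpha}p(t,x-y)\lesssim|x-y|^{\alpha-3}\lesssim|x|^{\alpha-3}$, which tends to $0$ as $|x|\to\infty$ because $\alpha<2<3$.

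Combining with the triangle inequality, each of the three $\limsup$'s is bounded by $C\varepsilon$, and letting $\varepsilon\to 0$ closes the proof. The main (though mild) obstacle is the $|x|\to\infty$ case, where one has to optimise over $t$; after that short calculation the whole argument is driven by density of $C_c^\infty$ in $L^{2/(\alpha-1)}$ and the $L^1\!\to\! L^\infty$ and $L^\infty\!\to\! L^\infty$ mapping properties of $P_t$.
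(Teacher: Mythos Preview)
Your proof is correct and follows essentially the same approach as the paper: both argue by approximation (the paper truncates $f$ to a large ball and uses smallness of the tail in $L^{2/(\alpha-1)}$, you approximate by $g\in C_c^\infty$), then handle the compactly supported piece via the same kernel bounds, including the key estimate $\sup_{t>0}t^{(\alpha-1)/\alpha}p(t,z)\lesssim|z|^{\alpha-3}$ for \eqref{eq:limPtf3}. Your argument for \eqref{eq:limPtf} is actually more self-contained than the paper's, which just cites \cite[(2.2)]{CF}.
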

\begin{proof}
The limit \eqref{eq:limPtf} follows from \cite[(2.2)]{CF}. Next, for every $\eps>0$ there is $R>0$ such that $\norm{f\mathbbm{1}_{B(0,R)^c}}_\frac{2}{\alpha-1} <\eps$. By Young inequality and \eqref{eq:estgradp}, 
\begin{align*}
\norm{P_t(f \mathbbm{1}_{B(0,R)^c})}_\infty \le \norm{p(t,\cdot)}_\frac{2}{3-\alpha}\norm{f\mathbbm{1}_{B(0,R)^c}}_\frac{2}{\alpha-1} \le  c_1 t^{\frac{1-\alpha}\alpha}\eps.
\end{align*}
Hence,
\begin{align*}
	\|t^{\frac{\alpha-1}\alpha}P_tf\|_\infty 
	&\le t^{\frac{\alpha-1}\alpha} \left(\norm{P_t(f \mathbbm{1}_{B(0,R)})}_\infty + \norm{P_t(f \mathbbm{1}_{B(0,R)^c})}_\infty \right) \\
	&\le c_1 t^{\frac{\alpha-1}\alpha} \left(\norm{p(t,\cdot)_{\infty}}\norm{ \mathbbm{1}_{B(0,R)}}_\frac{2}{3-\alpha}\,\norm{f}_\frac{2}{\alpha-1} + t^{\frac{1-\alpha}\alpha}\eps \right) \\
	&\le c_2 (t^{\frac{\alpha-3}{\alpha}} + \eps),
\end{align*}
which yields \eqref{eq:limPtf2}. Finally, for $|x| > 2R$ and $|y|<R$, by \eqref{eq:estp}, we have $p(t,x-y) \lesssim t^{\frac{1-\alpha}{\alpha}}|x-y|^{\alpha-3} \lesssim t^{\frac{1-\alpha}{\alpha}}|x|^{\alpha-3}$. Therefore, for $|x|>2R$,
\begin{align*}
\sup_{t>0}\left|t^{\frac{\alpha-1}\alpha}P_tf(x)\right| &\le \sup_{t>0} t^{\frac{\alpha-1}\alpha}\left(\left|P_t(f\mathbbm{1}_{B(0,R)})(x)\right| + \norm{P_t(f\mathbbm{1}_{B(0,R)^c})}_\infty\right) \\
&\lesssim |x|^{\alpha-3} \norm{f\mathbbm{1}_{B(0,R)}}_1 + \eps \lesssim |x|^{\alpha-3} \norm{f}_\frac{2}{\alpha-1} + \eps
\end{align*}
and \eqref{eq:limPtf3} holds.
\end{proof}

\subsection{Riesz transform}
Let $R_i$, $i\in\{1,2\}$ be the Riesz transforms, i.e.
\begin{align*}
R_i f(x) = c\,P.V. \int_{\RR^2} \frac{y_i}{|y|^3} f(x-y) dy, \qquad y = (y_1,y 	_2) \in \RR^2,
\end{align*}
where $c$ is some constant and  $P.V.$ denotes the principal value of the integral. Let $R= (R_1,R_2)$ and $\Rp = (-R_2,R_1)$. It is clear that $|\Rp f| = |R f|$. 
%\gs{Wydaje mi sie, ze to jest dla $f\in L^p$, $1<p<\infty$. Chyba chodzi o to, ze tak jest dla klasy Schwarza+agument o gestosci  }

It is well known that the Riesz transform is continuous on $L^p$ for $p\in(1,\infty)$, i.e. for $f\in L^p$ we have (see e.g. \cite[Corollary 4.8]{D})
\begin{equation}\label{eq:Rieszineq}
\|Rf\|_p\le c_p\|f\|_p.
\end{equation}

\begin{proposition}\label{prop:estRp}
For any multi-index $\k\in\N\times\N$ there is a constant $C>0$ such that  
\begin{equation}\label{eq:estRp}
|\Rp \nabla^{\k}p(t,x)|\le C t^{- \frac{|\k|}\alpha} \frac{1}{(t^{1/\alpha}+|x|)^2},  \qquad t>0, x \in \RR^2.
\end{equation}
\end{proposition}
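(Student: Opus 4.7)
The plan is to reduce to $t=1$ by scaling and then to bound $\Rp \nabla^\k p(1,x)$ directly, exploiting cancellation in the principal-value integral. The scaling property $p(t,x) = t^{-2/\alpha} p(1, t^{-1/\alpha} x)$ combined with the dilation invariance $R_i[f(\lambda\,\cdot)](x) = (R_i f)(\lambda x)$ (an elementary change of variables in the defining convolution) gives
\[
\Rp \nabla^\k p(t,x) = t^{-(2+|\k|)/\alpha}\,(\Rp \nabla^\k p)(1,t^{-1/\alpha}x).
\]
Substituting this reduces \eqref{eq:estRp} to the unit-time estimate $|\Rp \nabla^\k p(1,x)| \le C(1+|x|)^{-2}$, since $(1+t^{-1/\alpha}|x|)^{-2} = t^{2/\alpha}(t^{1/\alpha}+|x|)^{-2}$.

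For the unit-time bound, I would write
\[
\Rp \nabla^\k p(1,x) = c\,\mathrm{P.V.}\!\int_{\RR^2}\frac{y^\perp}{|y|^3}\,\nabla^\k p(1,x-y)\,dy,
\]
and split the integration at $|y|=\tfrac12(1\vee|x|)$. On the inner region, I would use the odd symmetry of $y^\perp/|y|^3$ to subtract $\nabla^\k p(1,x)$ (whose contribution vanishes on every centered ball) and then estimate $|\nabla^\k p(1,x-y)-\nabla^\k p(1,x)|$ by the mean value theorem together with \eqref{eq:estgradp} applied at order $|\k|+1$, producing a factor of $|y|$ that cancels the singularity. For $|x|\le 1$ this immediately yields a uniform bound; for $|x|>1$ the gradient is controlled by $C(1+|x|)^{-2-\alpha}$ uniformly on the integration region, so integrating $|y|^{-1}$ over a ball of radius $|x|/2$ gives a contribution of order $|x|(1+|x|)^{-2-\alpha}=O(|x|^{-1-\alpha})$, which is $O(|x|^{-2})$ precisely because $\alpha>1$. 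On the outer region I would drop the principal value, bound the kernel by $|y|^{-2}$, use $|\nabla^\k p(1,x-y)|\lesssim (1+|x-y|)^{-2-\alpha}$ from \eqref{eq:estgradp}, and split once more according to whether $|x-y|$ is smaller or larger than $|x|/2$: the first case pays the price $|x|^{-2-\alpha}$ but leaves an $L^1$ integral in $z=x-y$, while the second case bounds $|x-y|^{-2-\alpha}$ by $|x|^{-2-\alpha}$ and uses $\int |y|^{-2}\mathbf{1}_{|y|\le 2|x|}$ to obtain an extra logarithm absorbed into the constant; both give $O(|x|^{-2})$.

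The main obstacle I anticipate is the small-$y$ contribution when $|x|$ is large, where singular-integral cancellation and the pointwise decay of $\nabla^{\k+e_j}p(1,\cdot)$ must be used simultaneously; the condition $\alpha>1$ is exactly what makes the resulting $|x|^{-1-\alpha}$ beat the target $|x|^{-2}$. All other steps are straightforward estimations using \eqref{eq:estp} and \eqref{eq:estgradp}, and the final assembly is just a book-keeping exercise combining the two regions and the scaling identity above.
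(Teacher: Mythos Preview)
Your reduction by scaling and the use of cancellation via the mean value theorem on the inner shell are exactly what the paper does. The decomposition, however, is different: the paper splits at the \emph{fixed} radius $|y|=1$, so its inner contribution is directly $O(p(1,x))=O((1+|x|)^{-2-\alpha})$, and it disposes of the intermediate annulus $1<|y|\le|x|$ by the semigroup trick $|y|^{-2}\lesssim p(1,y)/\big(p(2,x)(1+|x|^2)\big)$ together with $\int p(1,y)p(1,x-y)\,dy=p(2,x)$. Your variable split at $|y|=\tfrac12(1\vee|x|)$ avoids this device but pushes the inner contribution up to $|x|^{-1-\alpha}$, which is where you invoke $\alpha>1$; the paper's version does not need $\alpha>1$ at that step. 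Both routes are legitimate; yours is more elementary, the paper's slightly sharper.

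Two points in your outer-region sketch need patching. In the case $|x-y|<|x|/2$ it is $|y|^{-2}\lesssim|x|^{-2}$ (not $|x|^{-2-\alpha}$) that supplies the decay, with $\int_{\RR^2}(1+|z|)^{-2-\alpha}\,dz<\infty$ handling the rest. In the case $|x-y|\ge|x|/2$ you restrict to $|y|\le 2|x|$, but the outer region is unbounded; for $|y|>2|x|$ use $|x-y|\ge|y|/2$ so that $(1+|x-y|)^{-2-\alpha}\lesssim|y|^{-2-\alpha}$ and $\int_{|y|>2|x|}|y|^{-4-\alpha}\,dy\lesssim|x|^{-2-\alpha}$. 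With these minor fixes your argument is complete.
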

\begin{proof}
It is easy to see that both sides of \eqref{eq:estRp} admit the scalling property $f(t,x)=t^{-(2+|\k|)/\alpha}f(1,t^{-1/\alpha}x)$.
%$\Rp \nabla^kp$ admits the scalling property $R^\perp \nabla^kp(t,x)=t^{-(2+|k|)/\alpha} R^\perp \nabla^kp(1,t^{-1/\alpha}x)$.
 Hence, it is enough to consider $t=1$. First, let us write
\begin{align*}
|R_i\nabla^\k p(1,x)| & = c\left| P.V. \int_{\RR^2}\frac{y_i}{|y|^3}\nabla^\k p(1,x-y)dy\right|\\
&\le c\left|P.V. \int_{|y|\le1} \frac{y_i}{|y|^3}\nabla^\k p(1,x-y)dy\right| +c \int_{|y|>1} \frac{1}{|y|^2}\left|\nabla^\k p(1,x-y)\right|dy.
\end{align*}
It follows from (\ref{eq:estgradp}) that
\begin{align*}
\sup_{|w| \le 1}\left|\nabla(\nabla^\k p(1,x+w)\right| \lesssim  t^{-(|\k|+1)/\alpha} p(t,x).
\end{align*}
Hence, since
\begin{align*}
P.V. \int_{|y|\le1}\frac{y_i}{|y|^3}\nabla^\k p(1,x)dy = 0,
\end{align*}
the mean value theorem gives us
\begin{align*}
\left|P.V. \int_{|y|\le1}\frac{y_i}{|y|^3}\nabla^\k p(1,x-y)dy \right|&= \left|P.V. \int_{|y|\le1}\frac{y_i}{|y|^3}(\nabla^\k p(1,x-y)-\nabla^\k p(1,x))dy \right| \\
& = \left| \int_{|y|\le1}\frac{y_i}{|y|^3} y \cdot \nabla(\nabla^\k p(1,x+w_y) dy\right|  \\
& \le \int_{|y|\le1}\frac{|y|^2}{|y|^3}\sup_{|w| \le 1}\left|\nabla(\nabla^\k p(1,x+w)\right|dy  \lesssim p(1,x) \lesssim \frac{1}{x^2+1}. 
\end{align*}
Next,
\begin{align*}
\int_{|y|>|x|\vee 1 }\frac{1}{|y|^2}\left|\nabla^\k p(1,x-y) dy\right|&\lesssim \int_{|y|>1 \vee |x|}\frac{1}{1+|x|^2} p(1,x-y)dy \le \frac{1}{1+|x|^2},
\end{align*}
which gives \eqref{eq:estRp} for $|x| \le 1$.
Finally, for $1< |y| \le |x|$, we have $\frac{1+|y|^{2+\alpha}}{|y|^2} \le 2|y|^\alpha \le 2|x|^\alpha\le 2\frac{2+|x|^{2+\alpha}}{|x|^2}$, which yields $\frac{1}{|y|^2} \lesssim \frac{p(1,y)}{p(2,x)(1+|x|^2)}$. Thus,
\begin{align*}
\int_{1< |y| \le |x|}\frac{1}{|y|^2}\left|\nabla^\k p(1,x-y) dy\right|&\lesssim \int_{\RR^d}\frac{1}{1+|x|^2} \frac{p(1,y) p(1,x-y)}{p(2,x)}dy = \frac{1}{1+|x|^2}.
\end{align*}

\end{proof}

%\begin{proposition}\label{prop:gradR}
%Let $\nabla^mf\in L^{p_m}(\RR^n)$, $p_m>2$, $n\in \N$, for $m\in\{1,...,k+1\}$. Then for $i\in\{1,...,n\}$ it holds
%\begin{equation}\label{eq:gradR}
%\nabla^kR_if=R_i\nabla^kf
%\end{equation}
%\end{proposition}
%\begin{proof}
%TO TRZEBA ZROBIC Z P.V.Without loss of generality, we consider $i=1$. For any $\varepsilon>0$ we have
%\begin{align*}
%R_1f&=\int_{y_1\in(-\varepsilon,\varepsilon)}\frac{1}{|y|^{n-1}}\frac{\partial }{\partial y_1}f(y-x)dy+\int_{y_1\notin(-\varepsilon,\varepsilon)}\frac{y}{|y|^{n+1}}f(y-x)dy.
%\end{align*}
%Taking derivatives of $\frac{\partial }{\partial y_1}f(y-x)$ and $f(y-x)$ leds to absolutely inegrable  functions, hence for any  multiindex $|\gamma|=k$ we get 
%\begin{align*}
%\nabla^\gamma R_1f&=\int_{y_1\in(-\varepsilon,\varepsilon)}\frac{1}{|y|^{n-1}}\nabla^\gamma\frac{\partial }{\partial y_1}f(y-x)dy+\int_{y_1\notin(-\varepsilon,\varepsilon)}\frac{y}{|y|^{n+1}}\nabla^\gamma f(y-x)dy.
%\end{align*}
%
%\end{proof}

\begin{proposition}\label{prop:RPthetaest} For every $\k \in \N^2$ there is a constant $C_\k>0$ such that for all $t>0$,
\begin{equation}\label{eq:RPthetaLinfty}
\norm{\nabla^\k R^\perp P_t\varphi}_\infty \le C_\k t^{-\frac{|\k|+\alpha-1}{\alpha}} \norm{\varphi}_\frac{2}{\alpha-1}, \qquad \varphi\in L^{\frac{2}{\alpha-1}}(\RR^2).
\end{equation}
Furthermore
$$\lim_{t\rightarrow0}\|t^{\frac{\alpha-1}\alpha}R^\perp P_t\varphi\|_\infty=\lim_{t\rightarrow\infty}\|t^{\frac{\alpha-1}\alpha}R^\perp P_t\varphi\|_\infty=\lim_{|x|\rightarrow\infty}\sup_{t>0}\left|t^{\frac{\alpha-1}\alpha}R^\perp P_{t}\varphi(x)\right|=0.$$
\end{proposition}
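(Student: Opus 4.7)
The plan is to handle the pointwise $L^\infty$-bound directly via Hölder and scaling, and then to reduce each of the three limits to the corresponding statement in the preceding lemma --- the two time limits by a commutativity plus density argument, and the spatial one by the same splitting used there.

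For the bound \eqref{eq:RPthetaLinfty} I would write $\nabla^\k R^\perp P_t\varphi(x)=\int_{\R^2}\nabla^\k R^\perp p(t,x-y)\,\varphi(y)\,dy$ and apply Hölder's inequality with the conjugate pair $\frac{2}{3-\alpha},\frac{2}{\alpha-1}$ to obtain
$$\|\nabla^\k R^\perp P_t\varphi\|_\infty\le\|\nabla^\k R^\perp p(t,\cdot)\|_{2/(3-\alpha)}\,\|\varphi\|_{2/(\alpha-1)}.$$
The scaling identity $\nabla^\k R^\perp p(t,x)=t^{-(2+|\k|)/\alpha}\nabla^\k R^\perp p(1,t^{-1/\alpha}x)$ combined with Proposition~\ref{prop:estRp}, which at $t=1$ gives an $L^{2/(3-\alpha)}$-integrand since $\tfrac{4}{3-\alpha}>2$ for $\alpha>1$, then produces exactly the exponent $-\tfrac{|\k|+\alpha-1}{\alpha}$.

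For the two time limits, my key tool is the commutativity $R^\perp P_t\psi=P_t R^\perp\psi$, valid for $\psi\in C_c^\infty(\R^2)$ by associativity of convolution (or because both operators are Fourier multipliers). For such $\psi$, $R^\perp\psi\in L^{2/(\alpha-1)}$ by the boundedness~\eqref{eq:Rieszineq} of the Riesz transform on that space, so \eqref{eq:limPtf} and \eqref{eq:limPtf2} applied to $R^\perp\psi$ yield $\lim_{t\to 0}t^{(\alpha-1)/\alpha}\|R^\perp P_t\psi\|_\infty=\lim_{t\to\infty}t^{(\alpha-1)/\alpha}\|R^\perp P_t\psi\|_\infty=0$. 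For general $\varphi\in L^{2/(\alpha-1)}$, I would approximate $\|\varphi-\psi\|_{2/(\alpha-1)}<\eps$ and use \eqref{eq:RPthetaLinfty} to control the difference uniformly in $t$; letting $\eps\to 0$ transfers the two limits to $\varphi$.

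For the spatial limit, I would mimic the splitting from the preceding lemma: fix $\eps>0$ and pick $R$ with $\|\varphi\mathbbm{1}_{B(0,R)^c}\|_{2/(\alpha-1)}<\eps$, so the tail part contributes at most $C\eps$ uniformly in $t$ via \eqref{eq:RPthetaLinfty}. For $|x|>2R$ and $|y|\le R$ one has $|x-y|\ge|x|/2$, hence by Proposition~\ref{prop:estRp} and Hölder,
$$|R^\perp P_t(\varphi\mathbbm{1}_{B(0,R)})(x)|\lesssim\frac{R^{3-\alpha}\|\varphi\|_{2/(\alpha-1)}}{(t^{1/\alpha}+|x|/2)^{2}},$$
and maximising $u^{\alpha-1}(u+|x|/2)^{-2}$ over $u=t^{1/\alpha}\ge 0$ produces $\sup_{t>0}\tfrac{t^{(\alpha-1)/\alpha}}{(t^{1/\alpha}+|x|/2)^2}\lesssim |x|^{\alpha-3}\to 0$. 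The main obstacle will be the commutativity step in the time limits: without moving $R^\perp$ past $P_t$ on a dense subclass, \eqref{eq:RPthetaLinfty} only gives uniform boundedness of $t^{(\alpha-1)/\alpha}R^\perp P_t\varphi$ and no decay, so one must exploit that $R^\perp$ regularises smooth compactly supported data into honest $L^{2/(\alpha-1)}$-functions to which the previous lemma applies.
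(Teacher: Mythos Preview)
Your proof is correct, and for the three limits it takes a genuinely different route from the paper. For \eqref{eq:RPthetaLinfty} both arguments are essentially the same H\"older step, the only cosmetic difference being that the paper moves $R_i$ onto $\varphi$ (using $\|\nabla^\k p(t,\cdot)\|_{2/(3-\alpha)}\|R_i\varphi\|_{2/(\alpha-1)}$ together with \eqref{eq:Rieszineq}) while you move it onto the kernel and invoke Proposition~\ref{prop:estRp}. For the two time limits, the paper does \emph{not} commute $R^\perp$ with $P_t$ or appeal to the preceding lemma; instead it works directly with the kernel representation $\int R_ip(t,x-y)\varphi(y)\,dy$ and splits $\varphi$ according to the level set $\{|\varphi|>M_\eps\}$ (and, for $t\to\infty$, also according to $\{|y|>R_\eps\}$), extracting the small-$t$ decay via a $\sqrt{|\varphi|}$ trick with H\"older exponent $\tfrac{4}{5-\alpha}$ and the large-$t$ decay via the crude bound $\|R_ip(t,\cdot)\|_\infty\lesssim t^{-2/\alpha}$ on a bounded region. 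Your commutativity-plus-density argument is cleaner and reuses \eqref{eq:limPtf}--\eqref{eq:limPtf2} wholesale; the paper's version is more self-contained and avoids justifying $R^\perp P_t=P_tR^\perp$. For the spatial limit, the paper first uses the two time limits to reduce to $t\in(t_1,t_2)$ and then estimates on that compact time interval, whereas you handle all $t>0$ at once by optimizing $t^{(\alpha-1)/\alpha}(t^{1/\alpha}+|x|/2)^{-2}$; both work, and yours is slightly more direct.
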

\begin{proof}
First, by (\ref{eq:Rieszineq}) and (\ref{eq:lpestp}), we have
\begin{align*}
|\nabla^\k R_iP_t\varphi(x)|&=\left|\nabla^\k\int_{\R^2}p(t,x-y)R_i\varphi(y)dy\right|\\
&\le\int_{\R^2} |\nabla ^kp(t,x-y)R_i\varphi(y)| dy\\
&\le \|\nabla^\k p(t,\cdot)\|_{\frac2{3-\alpha}}\|R_i\varphi\|_{\frac2{\alpha-1}}\lesssim t^{-(|k|+\alpha-1)/\alpha}\|\varphi\|_{\frac2{\alpha-1}},
\end{align*}
which gives \eqref{eq:RPthetaLinfty}. Let us fix $\varepsilon>0$. 
There are $M_\varepsilon>0$ and $R_\varepsilon$ such that $\norm{\varphi \mathbbm{1}_{\{|\varphi|>M_\varepsilon\}}}_{\frac{2}{\alpha-1}} \le \eps$ and $\norm{\varphi \mathbbm{1}_{B(0,R_\eps)^c}}_{\frac{2}{\alpha-1}} \le \eps$.
%$\int_{|\varphi|>M_\varepsilon}|\varphi|^{\frac2{\alpha-1}}<\varepsilon$ and $\int_{|y|>R_\varepsilon}|\varphi(y)|^{\frac2{\alpha-1}}dy<\varepsilon$. 
Hence, by \eqref{eq:Rieszineq} and \eqref{eq:lpestp},
\begin{align}
\int_{|\varphi|> M_\varepsilon}|R_ip(t,x-y)\varphi(y)|dy &\lesssim 	\norm{p(t,\cdot)}_{\frac2{3-\alpha}}\(\int_{|\varphi|>M_\varepsilon}|\varphi(y)|^{\frac2{\alpha-1}}dy\)^{\frac{\alpha-1}2} \le \varepsilon\, t^{-\frac{\alpha-1}{\alpha}}, \label{eq1:RPthetaest}\\
\int_{|y|> R_\varepsilon}|R_ip(t,x-y)\varphi(y)|dy &\lesssim 	\norm{p(t,\cdot)}_{\frac2{3-\alpha}}\(\int_{|y|>R_\varepsilon}\varphi(y)^{\frac2{\alpha-1}}dy\)^{\frac{\alpha-1}2} \le \varepsilon\, t^{-\frac{\alpha-1}{\alpha}}. \label{eq2:RPthetaest}
\end{align}

Thus, by  \eqref{eq1:RPthetaest}, \eqref{eq:Rieszineq} and \eqref{eq:lpestp}, 
\begin{align*}
|R_iP_t\varphi(x)|&=\left|\int_{\R^2}R_ip(t,x-y)\varphi(y)dy\right|\\
&\le  \sqrt {M_\varepsilon}\int_{|\varphi|\le M_\varepsilon}|R_ip(t,x-y)|\sqrt{|\varphi(y)|}dy +\int_{|\varphi|> M_\varepsilon}|R_ip(t,x-y)\varphi(y)|dy \\
&\lesssim \sqrt {M_\varepsilon} \|R_ip(t,\cdot)\|_{\frac4{5-\alpha}}\|\varphi\|_{\frac2{\alpha-1}}+\varepsilon\, t^{-\frac{\alpha-1}{\alpha}} \\
&\lesssim \sqrt{M_\varepsilon} t^{-\frac{\alpha-1}{2\alpha}}+\varepsilon\, t^{-\frac{\alpha-1}{\alpha}},
\end{align*}
and consequently $\norm{t^{\frac{\alpha-1}\alpha}R^{\perp}P_t\varphi}_\infty \lesssim \sqrt{M_\varepsilon} t^{\frac{\alpha-1}{2\alpha}}+\varepsilon$, which proves the first limit from the assertion.

Next, by  (\ref{eq:lpestp}), (\ref{eq:estRp}), \eqref{eq1:RPthetaest} and \eqref{eq2:RPthetaest}, we get 
\begin{align*}
|R_iP_t\varphi|&=\left|\int_{\R^2}R_ip(t,x-y)\varphi(y)dy\right|\\
&\le   M_\varepsilon\int_{\substack{|\varphi|\le M_\varepsilon\\|y|\le R_\varepsilon}}|R_ip(t,x-y)|dy +\int_{|\varphi|> M_\varepsilon}|R_ip(t,x-y)\varphi(y)|dy +\int_{|y|> R_\varepsilon}|R_ip(t,x-y)\varphi(y)|dy \\
&\lesssim  {M_\varepsilon R_\varepsilon^2}\,{t^{-2/\alpha}}+\varepsilon\, t^{-\frac{\alpha-1}{\alpha}},
\end{align*}
which implies  $\lim_{t\rightarrow\infty}\|t^{\frac{\alpha-1}\alpha}R^{\perp}P_t\varphi\|_\infty=0$. 
By virtue of the previous two limits, it is enough to prove that for any $0<t_1<t_2<\infty$,
$$\lim_{|x|\rightarrow\infty}\sup_{t\in(t_1,t_2)}\left|R^{\perp}P_t\varphi(x)\right|=0.$$ 
By  (\ref{eq:Rieszineq}), (\ref{eq:estRp}), \eqref{eq2:RPthetaest} and H\"older inequality, we get for $|x|>R_\varepsilon$ and $t\in(t_1,t_2)$,
\begin{align*}
|R_iP_t\varphi(x)|&=\left|\int_{\R^2}R_ip(t,x-y)\varphi(y)dy\right|\\
&\lesssim   \int_{|y| > R_\varepsilon}|R_ip(t,x-y) \varphi(y)|dy +\int_{|y|\le R_\varepsilon}\frac{1}{(t_1^{1/\alpha}+|x-y|)^2}\varphi(y)dy \\
&\lesssim   \varepsilon\, t_1^{-\frac{\alpha-1}{\alpha}} +\frac{1}{(|x|-R_\varepsilon)^2}\(\int \mathbbm{1}_{\{|y| \le R_\varepsilon\}}dy\)^{\frac{3-\alpha}2}\|\varphi\|_{\frac2{\alpha-1}},
\end{align*}

which is arbitrary small for large $|x|$. This proves the last assertion.

\end{proof}

\section{Asymptotics and  estimates of solutions}

First, we recall some results from \cite{CF} concerning $L^p$ estimates of the solutions  to \eqref{q-g}. We assume below that $\theta_0\in L^{\frac2{\alpha-1}}$. For $p\in [\frac2{\alpha-1},\infty]$ we have (see  \cite[Prop. 3.2]{CF}) 
\begin{align}
t^{\frac{\alpha-1+|\k|}{\alpha} -\frac2{\alpha p}}\nabla^\k\theta\in BC((0,\infty), L^{p}(\R^2)), \label{eq:thetaLp}
\end{align}
where $\k\in\N\times\N$. In particular, for $p\in [\frac2{\alpha-1},\infty]$,
\begin{align}
\label{eq:thetaLpnorm} 
\norm{\theta(t,\cdot)}_p \lesssim  t^{-\frac{\alpha-1}{\alpha} + \frac2{\alpha p}}, \qquad t>0.	
\end{align}
Combining this with (\ref{eq:Rieszineq}), we get for $p\in[\frac2{\alpha-1},\infty)$, 
\begin{equation}\label{eq:lpRtheta}
\|R^\perp \theta(t,\cdot)\|_p\lesssim t^{-\frac{\alpha-1}{\alpha} + \frac2{\alpha p}}, \qquad t>0.
\end{equation}
%Furthermore, by \cite[Theorem 3.4]{CF},\gs{NIGDZIE TEGO NIE WYKORZYSTUJEMY, WIEC MOZE WYWALIC?}
%\begin{equation}
%\lim_{t\rightarrow\infty}\|\theta(t,\cdot)\|_{\frac2{\alpha-1}}=0.
%\end{equation}
We will need the following auxiliary lemma.
\begin{lemma}\label{lem:tmp1}
Let $p\ge\frac2{\alpha-1}$ and $q=\frac{p}{p-1}$. Assume that $f(s,\cdot)\in L^{q}$ and $g(s,\cdot) \in L^p$ satisfy
\begin{align*}
\norm{f(s,\cdot)}_q \le c_1 s^{-\frac{3}{\alpha} + \frac{2}{\alpha q}}, && \norm{g(s,\cdot)}_p \le c_2 s^{-\frac{\alpha-1}{\alpha} +\frac2{\alpha p}}.
\end{align*}
Then, there is a constant $C$ such that for $p \ge \frac{2}{\alpha-1}$,
%\gs{Let $\theta$ be a solution to (\ref{q-g}) with $\theta_0\in L^{2/(\alpha-1)}$ and fix $p\in(\frac{2}{\alpha-1},\infty)$. For every $0<s<t<\infty$  we have}
\begin{align}\label{Eq1:tmp1}
	\int_{\R^2}  |f(t-s,x-y|\, |g(s,y)| dy \le C (t-s)^{-\(\frac1\alpha+\frac2{p\alpha}\)}s^{-\frac{\alpha-1}{\alpha} +\frac{2}{\alpha p}}\,, \qquad 0<s<t, \; x \in \RR^2.
\end{align}
Furthermore, for $t>0$, $x \in \RR^2$ and $p>\frac{2}{\alpha-1}$, we have  
\begin{align}\label{Eq2:tmp1}
	\int_0^t\int_{\R^2}  |f(t-s,x-y)|\, g	(s,y)|\,s^{-\frac{\alpha-1}\alpha}dy\,ds \le C\,  \mathcal{B}\(\tfrac{p(\alpha-1)-2}{p\alpha}, \tfrac{p(2-\alpha)+2+p}{\alpha p}\)t^{-\frac{\alpha-1}{\alpha}}.
\end{align}
\end{lemma}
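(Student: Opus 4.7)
The plan is to derive \eqref{Eq1:tmp1} directly from H\"older's inequality and then to obtain \eqref{Eq2:tmp1} by integrating \eqref{Eq1:tmp1} against the weight $s^{-(\alpha-1)/\alpha}$ and reducing the resulting one-dimensional integral to a Beta function via a scaling change of variables.

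For \eqref{Eq1:tmp1}, since $1/p+1/q=1$, H\"older's inequality in the $y$-variable gives
$$
\int_{\R^2}|f(t-s,x-y)|\,|g(s,y)|\,dy\le \|f(t-s,\cdot)\|_q\,\|g(s,\cdot)\|_p.
$$
Substituting the assumed norm bounds produces the prefactor $(t-s)^{-3/\alpha+2/(\alpha q)}\,s^{-(\alpha-1)/\alpha+2/(\alpha p)}$, and the identity $1/q=1-1/p$ simplifies the exponent of $(t-s)$ to $-3/\alpha+2/\alpha-2/(\alpha p)=-1/\alpha-2/(\alpha p)$, matching the right-hand side of \eqref{Eq1:tmp1}. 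This step is routine.

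For \eqref{Eq2:tmp1}, I multiply the pointwise bound \eqref{Eq1:tmp1} by $s^{-(\alpha-1)/\alpha}$ and integrate in $s$, which reduces the matter to estimating
$$
\int_0^t (t-s)^{-A}\, s^{-B}\,ds,\qquad A=1/\alpha+2/(\alpha p),\qquad B=2(\alpha-1)/\alpha-2/(\alpha p).
$$
The change of variables $s=tu$ factors out $t^{1-A-B}\mathcal{B}(1-B,1-A)$, and a short arithmetic check shows $1-A-B=-(\alpha-1)/\alpha$, exactly the time scaling required. The dimensionless integral is a Beta function with arguments $1-A=(p(\alpha-1)-2)/(p\alpha)$ and $1-B=(p(2-\alpha)+2)/(p\alpha)$.

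The only delicate point is integrability of the Beta kernel at $s=0$, which demands $1-A>0$; this condition simplifies to $p>2/(\alpha-1)$, precisely the strict inequality assumed in the hypothesis. Integrability at $s=t$ holds automatically because $\alpha<2$ forces $1-B>0$. Thus the main obstacle is purely the bookkeeping of rational exponents and the observation that the threshold $p=2/(\alpha-1)$ is exactly where the time integral ceases to converge, so no additional tool beyond H\"older's inequality and the Beta identity is needed.
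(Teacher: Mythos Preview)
Your proposal is correct and follows essentially the same argument as the paper: H\"older's inequality for \eqref{Eq1:tmp1}, then integration in $s$ with the substitution $s=tu$ to produce the Beta function for \eqref{Eq2:tmp1}. One small slip: you have the endpoints reversed in the integrability discussion --- the condition $1-A>0$ (equivalently $p>2/(\alpha-1)$) governs the singularity at $s=t$ (where $(t-s)^{-A}$ blows up), while $1-B>0$ governs $s=0$; since you verify both conditions correctly this does not affect the argument.
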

\begin{proof} By H\"older inequality,
\begin{align*}
\int_{\R^2}|f(t-s,x-y)| |g(s,y)| dy &\le \|f(t-s,\cdot)\|_{\frac p{p-1}}\| g(s,\cdot)\|_{p}  \le c_1 c_2 (t-s)^{-\frac1\alpha-\frac2{\alpha p}}s^{-\frac{\alpha-1}{\alpha} + \frac{2}{\alpha p}}\,,
\end{align*}
which gives \eqref{Eq1:tmp1}. Furthermore, this implies
\begin{align*}
	\int_0^t\int_{\R^2}  |f(t-s,x-y)|\, g(s,y)|\,s^{-\frac{\alpha-1}\alpha}dy\,ds 
	&\le c \int_0^t (t-s)^{-\frac1\alpha - \frac2{p\alpha}}s^{-\frac{2\alpha-2}{\alpha} +\frac{2}{\alpha p}}\,ds\\
 &= c t^{-\frac{\alpha-1}{\alpha}}\int_0^1 (1-u)^{-\frac1\alpha-\frac2{p\alpha}} u^{-\frac{2\alpha-2}{\alpha} +\frac{2}{\alpha p}}\,du \\
 &= c \, \mathcal{B}\(\tfrac{p(\alpha-1)-2}{p\alpha}, \tfrac{(2-\alpha)p+2}{p\alpha}\) t^{-\frac{\alpha-1}{\alpha}}.
\end{align*}
\end{proof}

The next corollary is an immediate consequence of Lemma \ref{lem:tmp1}.

\begin{cor}\label{cor:tmp1}
Let $\theta$ be a solution to (\ref{q-g}) with $\theta_0\in L^{\frac{2}{\alpha-1}}$. For every $t>0$ and $p\in\big(\frac{2}{\alpha-1},\infty\big)$, we have 
\begin{align}
\int_0^t\int_{\R^2}  |\nabla p(t-s,\cdot)(x-y)|\, |R\theta(s,y)|\,s^{-\frac{\alpha-1}\alpha}dy\,ds &\le C t^{-\frac{\alpha-1}{\alpha}},  \label{Eq3:tmp1}\\
	\int_0^t\int_{\R^2}  |R\nabla p(t-s,\cdot)(x-y)|\, |R\theta(s,y)|\,s^{-\frac{\alpha-1}\alpha}dy\,ds &\le C t^{-\frac{\alpha-1}{\alpha}}. \label{Eq3:tmp2}
%	\int_0^t\int_{\R^2}  |\nabla p(t-s,\cdot)(x-y)|\, |\nabla^\k\theta(s,y)|\,s^{-\frac{\alpha-1}\alpha}dy\,ds &\le C t^{-\frac{\alpha-1+|\k|}{\alpha}}. \label{Eq3:tmp3}
\end{align}

\end{cor}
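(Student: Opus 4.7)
The plan is to apply Lemma \ref{lem:tmp1} twice, once with $f = \nabla p$ and once with $f = R\nabla p$, in both cases with $g = R\theta$. The only thing to verify is that both choices of $f$ and $g$ satisfy the $L^q$ and $L^p$ hypotheses of Lemma \ref{lem:tmp1}, after which \eqref{Eq2:tmp1} yields the desired $t^{-(\alpha-1)/\alpha}$ bound directly.

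For \eqref{Eq3:tmp1}, I would set $f(s,\cdot) = \nabla p(s,\cdot)$. The $L^q$ bound \eqref{eq:lpestp} with $|\k| = 1$ gives
\[
\|\nabla p(s,\cdot)\|_q \lesssim s^{-\frac{2}{\alpha}\bigl(1 - \frac{1}{q}\bigr) - \frac{1}{\alpha}} = s^{-\frac{3}{\alpha} + \frac{2}{\alpha q}},
\]
which matches the hypothesis on $f$ in Lemma \ref{lem:tmp1}. For $g(s,\cdot) = R\theta(s,\cdot)$, the required bound $\|g(s,\cdot)\|_p \lesssim s^{-(\alpha-1)/\alpha + 2/(\alpha p)}$ is exactly \eqref{eq:lpRtheta}. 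Since $p \in \bigl(\tfrac{2}{\alpha-1},\infty\bigr)$, the Beta-function integrability at both endpoints in \eqref{Eq2:tmp1} is in force, and the lemma delivers \eqref{Eq3:tmp1}.

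For \eqref{Eq3:tmp2}, I would set $f(s,\cdot) = R\nabla p(s,\cdot)$. Since $p > \tfrac{2}{\alpha-1} > 2$, the conjugate exponent $q = p/(p-1)$ lies in $(1,2) \subset (1,\infty)$, so the $L^q$-continuity of the Riesz transform \eqref{eq:Rieszineq} gives
\[
\|R\nabla p(s,\cdot)\|_q \lesssim \|\nabla p(s,\cdot)\|_q \lesssim s^{-\frac{3}{\alpha} + \frac{2}{\alpha q}}.
\]
Thus $f$ again satisfies the hypothesis of Lemma \ref{lem:tmp1}, and with the same $g = R\theta$ as before, \eqref{Eq2:tmp1} yields \eqref{Eq3:tmp2}.

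There is essentially no obstacle here: the corollary is labelled as an immediate consequence, and the only mild point is to observe that the strict inequality $p > \tfrac{2}{\alpha-1}$ ensures $q > 1$ so the Riesz transform is bounded on $L^q$, and simultaneously guarantees positivity of both Beta-function parameters appearing in \eqref{Eq2:tmp1}. Everything else is pure bookkeeping of exponents.
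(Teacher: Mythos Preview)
Your proposal is correct and follows essentially the same approach as the paper: the paper's proof is the single sentence ``Both of the bounds follow from (\ref{eq:lpestp}), \eqref{eq:Rieszineq}, \eqref{eq:thetaLp} and \eqref{eq:lpRtheta} applied to \eqref{Eq2:tmp1},'' which is precisely your argument of feeding $f=\nabla p$ (resp.\ $f=R\nabla p$) and $g=R\theta$ into Lemma~\ref{lem:tmp1}. Your observation that $q=p/(p-1)\in(1,2)$ so that \eqref{eq:Rieszineq} applies, and that the strict inequality $p>\tfrac{2}{\alpha-1}$ secures the Beta-function convergence, makes explicit exactly the details the paper leaves implicit.
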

\begin{proof}
Both of the bounds follow  from (\ref{eq:lpestp}), \eqref{eq:Rieszineq}, \eqref{eq:thetaLp} and \eqref{eq:lpRtheta} applied to \eqref{Eq2:tmp1}.
\end{proof}

The below-given bound extends (\ref{eq:lpRtheta}) to $p\in(1,\infty]$.
\begin{proposition}\label{prop:Rtheta-est} Assume $\theta_0\in L^{\frac2{\alpha-1}}$. There is a constant $C>0$ sucht that
\begin{equation}\label{eq:RthetaLinfty}
\|R^\perp\theta(t,\cdot)\|_\infty \le  C t^{-\frac{\alpha-1}{\alpha}}.
\end{equation}
\end{proposition}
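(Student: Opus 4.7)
The plan is to apply $R^\perp$ to both sides of the Duhamel formula \eqref{eq:Duhamel}, which yields
\[
R^\perp\theta(t,x) = R^\perp P_t\theta_0(x) + \int_0^t\int_{\R^2} R^\perp\nabla p(t-s,x-y)\cdot R^\perp\theta(s,y)\,\theta(s,y)\,dy\,ds,
\]
commuting $R^\perp$ with the $y$-convolution (this is legitimate because, by Proposition \ref{prop:estRp} with $|\k|=1$, the kernel $R^\perp\nabla p(t-s,\cdot)$ belongs to every $L^q$ with $q>1$, while the product $R^\perp\theta(s,\cdot)\,\theta(s,\cdot)$ is controlled via \eqref{eq:thetaLpnorm}--\eqref{eq:lpRtheta}). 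Once this is established, the two terms on the right-hand side will be treated separately.

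For the linear part, Proposition \ref{prop:RPthetaest} applied with $\k=0$ immediately gives $\|R^\perp P_t\theta_0\|_\infty \lesssim t^{-(\alpha-1)/\alpha}$, which is the required bound. For the nonlinear part, the key observation is that \eqref{eq:thetaLpnorm} with $p=\infty$ yields the pointwise bound $|\theta(s,y)| \le C s^{-(\alpha-1)/\alpha}$, so one factor of $\theta$ can be pulled out of the integral. The remaining expression is
\[
C\int_0^t\int_{\R^2} |R^\perp\nabla p(t-s,x-y)|\,|R^\perp\theta(s,y)|\,s^{-(\alpha-1)/\alpha}\,dy\,ds,
\]
which (using $|R^\perp f|=|Rf|$) matches exactly the left-hand side of \eqref{Eq3:tmp2} in Corollary \ref{cor:tmp1}, and is therefore bounded by $Ct^{-(\alpha-1)/\alpha}$.

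Combining both estimates yields $\|R^\perp\theta(t,\cdot)\|_\infty \le Ct^{-(\alpha-1)/\alpha}$. The only mildly delicate point is the commutation argument at the beginning: one should check that Fubini applies to the triple integral obtained after formally interchanging $R^\perp$ (a principal-value singular integral) with the Duhamel convolution. This is straightforward given the estimates collected in Section 2, since the pointwise bound \eqref{eq:estRp} together with \eqref{eq:thetaLpnorm} and \eqref{eq:lpRtheta} at a convenient exponent $p\in\bigl(\tfrac{2}{\alpha-1},\infty\bigr)$ shows that everything is absolutely integrable in $(s,y)$ for fixed $(t,x)$; this is the step I expect to require the most care in writing up, whereas the decay rate itself then falls out of Corollary \ref{cor:tmp1} with no further effort.
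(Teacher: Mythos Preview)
Your proposal is correct and follows essentially the same route as the paper: apply $R^\perp$ to the Duhamel formula, bound the linear term by Proposition~\ref{prop:RPthetaest}, and for the nonlinear term pull out one factor of $\theta$ using \eqref{eq:thetaLpnorm} with $p=\infty$ and then invoke \eqref{Eq3:tmp2} from Corollary~\ref{cor:tmp1}. The only difference is that you spell out the justification for commuting $R^\perp$ with the convolution, which the paper leaves implicit.
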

\begin{proof}
By \eqref{eq:Duhamel}, we get
\begin{align*}
R_i\theta(t,\cdot)(x)&=R_iP_t\theta_0(x)+\int_0^t\int_{\R^2}R_i\nabla p(t-s,\cdot)(x-y)\cdot R^\perp\theta(s,y)\theta(s,y)dy\,ds.
\end{align*}
By Proposition \ref{prop:RPthetaest},  we have $\norm{R_iP_t\theta_0}_\infty \le ct^{-\frac{\alpha-1}{\alpha}}$ and the assertion follows from (\ref{eq:thetaLpnorm}) and \eqref{Eq3:tmp2}.
%\begin{align*}
%\int_0^t\int_{\R^2}R_i\nabla p(t-s,\cdot)(x-y)\cdot R^\perp\theta(s,y)\theta(s,y)dy\,ds
%&\lesssim \int_0^t(t-s)^{-\(\frac1\alpha+\frac2{p\alpha}\)}s^{-\frac{\alpha-1}{\alpha}\(2-\frac{2}{(\alpha-1)p}\)}\,ds \\
%&= t^{-\frac{\alpha-1}{\alpha}}\int_0^1 (1-u)^{-\(\frac1\alpha+\frac2{p\alpha}\)} u^{-\frac{\alpha-1}{\alpha}\(2-\frac{2}{(\alpha-1)p}\)}\,du,	
%\end{align*}}
%which proves the first assertion.
\end{proof}
Now, we will pass to the proof of pointwise upper bounds for solutions to \eqref{q-g}.	 Let  $L^{p,\lambda}(\RR^2)$ be the Morrey space, i.e.
\begin{align*}
L^{p,\lambda}(\RR^2) = \{f \in L^p(\RR^2) \colon \norm{f}_{L^{p,\lm}} := \sup_{r>0} \sup_{x \in \RR^2} r^{-\lambda} \int_{B(x,r) \cap \Omega} |f(z)|^p dz<\infty\}.
\end{align*}
The Morrey space is a Banach space with the norm $\norm{f}_{L^{p,\lm}}$.
For any Banach space $X$ we denote by $L^{p,\lambda}((0,\infty);X)$ the space of functions $f \colon (0,\infty) \to X$ such that
\begin{align*}
\norm{f}_{L^{p,\lm}((0,\infty);X)} := \sup_{0<s<t<\infty}  \left( (t-s)^{-\lambda}\int_s^t \norm{f(r)}_X^p dr\right)^{\frac1p} < \infty.
\end{align*} 
 It is also a Banach space with the norm $\norm{f}_{L^{p,\lm}((0,\infty);X)}$.
\begin{lemma}\label{lem:uppercrit}
Let $\theta_0\in L^{\frac{2}{\alpha-1}}(\RR^2)$. There is a constant $C>0$ such that for all $t>0$ and $x \in \RR^2$, we have 
\begin{align}\label{eq:uppercrit}
\theta(t,x)\le C P_t|\theta_0|(x).
\end{align}
\end{lemma}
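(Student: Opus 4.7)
The plan is to estimate the right-hand side of the Duhamel formula \eqref{eq:Duhamel} pointwise. Since $|P_t\theta_0(x)| \le P_t|\theta_0|(x)$, the whole question reduces to bounding the bilinear term by a constant multiple of $P_t|\theta_0|(x)$.

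The three tools I would combine are: (i) the gradient estimate $|\nabla p(t-s,x-y)| \le c(t-s)^{-1/\alpha} p(t-s,x-y)$ from \eqref{eq:estgradp}; (ii) the uniform bound $|R^\perp\theta(s,y)| \le Cs^{-(\alpha-1)/\alpha}$ from Proposition \ref{prop:Rtheta-est}; and (iii) the semigroup identity $P_{t-s}(P_s|\theta_0|)(x) = P_t|\theta_0|(x)$. Introducing
\[
M(T) := \sup_{0<t\le T}\sup_{x \in \RR^2}\frac{|\theta(t,x)|}{P_t|\theta_0|(x)}
\]
and applying (i)--(iii) in sequence, the bilinear term is majorized by
\[
C_0\,M(T)\,P_t|\theta_0|(x)\int_0^t s^{-(\alpha-1)/\alpha}(t-s)^{-1/\alpha}\,ds \,=\, C_1\,M(T)\,P_t|\theta_0|(x),
\]
since the Beta integral equals $\mathcal{B}(1/\alpha,(\alpha-1)/\alpha)$ independently of $t$. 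This yields the self-referential bound $M(T) \le 1 + C_1\,M(T)$ with $C_1$ depending on $\theta_0$ but not on $T$.

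The main obstacle is that this inequality closes only under the smallness condition $C_1<1$, whereas $C_1$ inherits $\|\theta_0\|_{2/(\alpha-1)}$ through Proposition \ref{prop:Rtheta-est}. To overcome this I would localize in time. Using the limit $\lim_{s\to 0}s^{(\alpha-1)/\alpha}\|R^\perp P_s\theta_0\|_\infty=0$ from Proposition \ref{prop:RPthetaest}, together with a bootstrap on the Duhamel representation of $R^\perp\theta$ itself in the spirit of the proof of Proposition \ref{prop:Rtheta-est}, I would first establish $\lim_{s\to 0}s^{(\alpha-1)/\alpha}\|R^\perp\theta(s,\cdot)\|_\infty=0$. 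This produces an initial interval $(0,t_*)$ on which the effective constant in the previous display is arbitrarily small, the contraction closes, and the bound $|\theta(t,x)|\le 2P_t|\theta_0|(x)$ holds for $t\le t_*$. To extend past $t_*$ I would restart the argument from $t_*$ with new initial datum $\theta(t_*,\cdot)$, which already satisfies the bound; the semigroup identity $P_{t-t_*}P_{t_*}|\theta_0|=P_t|\theta_0|$ preserves the shape of the majorant under such restarts, so a finite chain of them, complemented by the analogous large-time limit from Proposition \ref{prop:RPthetaest}, covers all $t>0$.
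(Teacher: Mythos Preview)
Your strategy is quite different from the paper's, and it has a circularity that you have not resolved. The contraction $M(T)\le 1+C_1 M(T)$ can only be closed once you know that $s^{(\alpha-1)/\alpha}\|R^\perp\theta(s,\cdot)\|_\infty$ is \emph{small} for small (or large) $s$, not merely bounded. You propose to obtain this by ``a bootstrap on the Duhamel representation of $R^\perp\theta$ in the spirit of Proposition~\ref{prop:Rtheta-est}'', but that proof only yields the uniform bound $\|R^\perp\theta(t,\cdot)\|_\infty\le Ct^{-(\alpha-1)/\alpha}$; the Beta integral that appears in the bilinear term is a fixed constant and does not become small as $t\to 0$. In the paper, the vanishing limits $\lim_{t\to 0}t^{(\alpha-1)/\alpha}\|R^\perp\theta(t,\cdot)\|_\infty=\lim_{t\to\infty}(\cdots)=0$ are established in Proposition~\ref{prop:Rtheta-lim}, and that proof \emph{uses} Lemma~\ref{lem:uppercrit} (through \eqref{eq:uppercrit} combined with \eqref{eq:limPtf}--\eqref{eq:limPtf2}). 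So the very ingredient you need to make the contraction close is, in this framework, a consequence of the lemma you are trying to prove.

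There is a second issue with the restart chain. Even granting a first interval $(0,t_*)$ with constant $2$, restarting from $t_*$ with datum $\theta(t_*,\cdot)$ gives $|\theta(t_*+\tau,x)|\le 2P_\tau|\theta(t_*,\cdot)|(x)\le 4P_{t_*+\tau}|\theta_0|(x)$ on an interval of length $\sim t_*$; iterating produces a constant $2^{n}$ at time $(1+c)^n t_*$, hence a bound that grows like a power of $t$. You invoke the large-time limit from Proposition~\ref{prop:RPthetaest} to terminate this chain, but that proposition concerns $R^\perp P_t\varphi$, not $R^\perp\theta$; passing from one to the other again requires Proposition~\ref{prop:Rtheta-lim}, i.e.\ again the lemma under proof.

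The paper avoids all of this by a completely different route: it freezes the drift $v=R^\perp\theta$, regards $\theta$ as the solution of the \emph{linear} equation $\partial_t u=\Delta^{\alpha/2}u+v\cdot\nabla u$, and invokes the Maekawa--Miura bound \cite{2013-YM-HM-jfa} on fundamental solutions of fractional diffusions with divergence-free drift, $\tilde p(t,x,y)\le c\,p(t,x-y)$. The work then reduces to verifying the Morrey/Campanato conditions on $v$ required by that theorem, which follow from the $L^p$ bounds \eqref{eq:thetaLpnorm}, \eqref{eq:lpRtheta} and \eqref{eq:RthetaLinfty} alone, with no smallness or self-referential step.
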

\begin{proof}
Let  $v=R^{\perp}\theta$ and consider the linear equation
\begin{align}\label{eq:MM}
\partial_t u = \Delta^{\alpha/2} u + v \cdot \nabla u.
\end{align}
By \cite[Corollary 1.4]{2013-YM-HM-jfa}, the fundamental solution $\tilde{p}(t,x,y)$ of \eqref{eq:MM} is bounded by $p(t,x-y)$, that is
\begin{align}\label{eq:MM1}
	\tilde{p}(t,x,y) \le c\,p(t,x-y), \qquad t>0, \; x,y \in \RR^2.
\end{align}
Indeed, taking $\lambda=\frac{2(2-\alpha)}\alpha$ and $q=\infty$ in \cite[Corollary 1.4]{2013-YM-HM-jfa}, we only need to show that all required assumptions are satisfied, i.e. $\nabla v=0$ and 
\begin{align}
	v &\in L^{2,\frac{2}{\alpha} - \frac{\lambda}{2}}((0,\infty); \mathcal{L}^{\frac{4}{\alpha},\lambda}(\RR^2)), \label{eq1:uppercrit}\\
v &\in L_{loc}^{\infty}((0,\infty); L_{loc}^1(\RR^2)),  \label{eq2:uppercrit} \\
v &\in L^{1,\frac{1}{\alpha}}((0,\infty); L^1_{u\ loc}(\RR^2)), \label{eq3:uppercrit}
\end{align}
where $\mathcal{L}^{p,\lambda}(\RR^2)$ is a Campanato space. Since $\lm = \frac{2(2-\alpha)}{\alpha}<2$ for $\alpha>1$, the Campanato space $\mathcal{L}^{\frac4\alpha,\lambda}(\RR^2)$ reduces to the Morrey space $L^{\frac4\alpha,\lambda}(\RR^2)$, see, e.g. \cite{RSS}.
 Clearly, we have $\nabla v=0$.
Furthermore, by (\ref{eq:Rieszineq}), (\ref{eq:thetaLpnorm}) and H\"older inequality,
\begin{align*}
\|R^{\perp}\theta(u,\cdot)\|_{L^{\frac{4}{\alpha},\frac{2(2-\alpha)}{\alpha}}}
&\le \sup_{x\in\RR^2, r>0}\(r^{-\frac{2(2-\alpha)}{\alpha}}\int_{B(x,r)}|R\theta(t,z)|^{\frac4{\alpha}}dz\)^{\frac{\alpha}4}\\
&\le \sup_{x\in\RR^2, r>0}\(r^{-\frac{2(2-\alpha)}{\alpha}}\(\int_{B(x,r)}dz\)^{\frac{2-\alpha}\alpha}\(\int_{\RR^2}|R\theta(t,z)|^{\frac2{\alpha-1}}dz\)^{\frac{2(\alpha-1)}{\alpha}}\)^{\frac{\alpha}4}\\
&\le c \|\theta_0\|_{\frac{2}{\alpha-1}}.
\end{align*}
Hence,
\begin{align*}
\norm{v}_{L^{2,\frac{2}{\alpha} - \frac{\lambda}{2}}\big((0,\infty);L^{\frac{4}{\alpha},\frac{2(2-\alpha)}{\alpha}}\big)} 
= \sup_{t>0}\sup_{0<s<t} \((t-s)^{-1}\int_s^t \norm{R\theta(u,\cdot)}_{L^{\frac{4}{\alpha},\frac{2(2-\alpha)}{\alpha}}}^{2}du\)^{\frac12}
\le c \norm{\theta_0}_{\frac{2}{\alpha-1}},
\end{align*}
which gives \eqref{eq1:uppercrit}. Next, \eqref{eq2:uppercrit} is an immediate consequence of \eqref{eq:RthetaLinfty}. Finally, by \eqref{eq:RthetaLinfty}, we have
$$\|R^{\perp}\theta(t,\cdot)\|_{L^1_{u\ loc}(\RR^2)}:=\sup_{x\in\RR^2}\int_{B(x,1)}|R^{\perp}\theta(t,y)|dy\lesssim \|R^{\perp}\theta\|_\infty\lesssim t^{-(\alpha-1)/\alpha}. $$
Consequently,
\begin{align*}
\sup_{t>0}\sup_{0<s<t}\((t-s)^{-1/\alpha}\int_s^t\|R\theta(u,\cdot)\|_{L^1_{u\ loc}(\RR^2)}du\)\lesssim \sup_{t>0}\sup_{0<s<t}\((t-s)^{-1/\alpha}\int_0^{t-s}u^{-(\alpha-1)/\alpha}du\)\le\alpha ,
\end{align*}
which yields \eqref{eq3:uppercrit}. 
Now consider \eqref{eq:MM} with initial condition $u_0 = \theta_0$. Clearly, $$\theta(t,x) = \int_{\RR^2}\tilde{p}(t,x,y) \theta_0(y) dy$$ is a solution to this problem and  \eqref{eq:MM1} gives us
\begin{align*}
|\theta(t,x)| \le \int_{\RR^2} \tilde{p}(t,x,y) |\theta_0(y)| dy \le c \int_{\RR^2} p(t,x,y) |\theta_0(y)| dy = c\,P_t |\theta_0|(x).
\end{align*}

\end{proof}

\begin{proposition}\label{prop:Rtheta-lim} Assume $\theta_0\in L^{\frac{2}{\alpha-1}}(\RR^2)$. We have
\begin{equation}\label{eq:limRtheta}\lim_{t\rightarrow0}\|t^{\frac{\alpha-1}\alpha}R\theta(t,\cdot)\|_\infty=\lim_{t\rightarrow\infty}\|t^{\frac{\alpha-1}\alpha}R\theta(t,\cdot)\|_\infty=\lim_{|x|\rightarrow\infty}\sup_{t>0}\left|t^{\frac{\alpha-1}\alpha}R\theta(t,x)\right|=0.
\end{equation}
%TRESC CALEGO PROPOSITION MOZNA SKROCIC DO:(CO TY NA TO?) \tj{moze byc, tylko co z ciagloscia?}
%$$t^{\frac{\alpha-1}\alpha}R\theta\in C_0\((0,\infty)\times\RR^2\).$$
\end{proposition}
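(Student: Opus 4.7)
The plan is to decompose via Duhamel's formula,
\[
R_i\theta(t,x) = R_i P_t\theta_0(x) + N_i(t,x),\qquad N_i(t,x) := \int_0^t\!\!\!\int_{\RR^2}R_i\nabla p(t-s,x-y)\,R^\perp\theta(s,y)\,\theta(s,y)\,dy\,ds,
\]
so that the three limits for the linear piece $R_iP_t\theta_0$ are supplied by Proposition \ref{prop:RPthetaest}, and it suffices to establish them for $N_i$. The central input is the pointwise bound $|\theta(s,y)| \le C\,P_s|\theta_0|(y)$ from Lemma \ref{lem:uppercrit}, which yields $\|\theta(s,\cdot)\|_\infty \le C\,\eta_0(s)\,s^{-(\alpha-1)/\alpha}$ with $\eta_0(s) := s^{(\alpha-1)/\alpha}\|P_s|\theta_0|\|_\infty$, and $\eta_0(s)\to 0$ as $s\to 0$ or $s\to\infty$ by \eqref{eq:limPtf}--\eqref{eq:limPtf2}.

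For $t\to 0$, pull $\sup_{s\le t}\eta_0(s)$ outside the time integral and apply \eqref{Eq3:tmp2} to bound what remains by $Ct^{-(\alpha-1)/\alpha}$; then $t^{(\alpha-1)/\alpha}\|N_i(t,\cdot)\|_\infty\le C\sup_{s\le t}\eta_0(s)\to 0$.

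For $t\to\infty$, split $N_i = N_i^{(1)} + N_i^{(2)}$ at $s = t/2$. The piece $N_i^{(2)}$ on $(t/2,t)$ is treated as above because $\sup_{s\ge t/2}\eta_0(s)\to 0$. For $N_i^{(1)}$ on $(0,t/2)$, Hölder's inequality with the conjugate pair $2/(3-\alpha)$ and $2/(\alpha-1)$, combined with $\|R_i\nabla p(t-s,\cdot)\|_{2/(3-\alpha)}\lesssim(t-s)^{-1}$ (from \eqref{eq:lpestp} and the $L^p$-continuity of $R$) and $\|R^\perp\theta(s,\cdot)\theta(s,\cdot)\|_{2/(\alpha-1)}\lesssim \eta_0(s)s^{-(\alpha-1)/\alpha}$, yields
\[
|N_i^{(1)}(t,x)| \lesssim \frac{1}{t}\int_0^{t/2}\eta_0(s)\,s^{-(\alpha-1)/\alpha}\,ds.
\]
Splitting this last integral at a large cutoff $T$ beyond which $\eta_0<\varepsilon$ gives $t^{(\alpha-1)/\alpha}|N_i^{(1)}(t,x)|\le C_T\, t^{-1/\alpha} + C\varepsilon$, and letting $t\to\infty$ and then $\varepsilon\to 0$ concludes. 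Extracting $o(t^{-(\alpha-1)/\alpha})$ rather than just the pointwise $O(t^{-(\alpha-1)/\alpha})$ from \eqref{Eq3:tmp2} on this long-time, short-$s$ piece is the main technical obstacle, because $\eta_0$ need not be small for moderate $s$ even when $t$ is large.

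For $|x|\to\infty$, the two previous limits (together with Proposition \ref{prop:RPthetaest}) reduce matters to proving $\sup_{t\in[t_1,t_2]}|N_i(t,x)|\to 0$ for a fixed compact interval $[t_1,t_2]\subset(0,\infty)$. Setting $\psi(y) := \sup_{s>0}s^{(\alpha-1)/\alpha}P_s|\theta_0|(y)$, which is bounded and vanishes at infinity by \eqref{eq:limPtf3}, one has $|\theta(s,y)|\le \psi(y)s^{-(\alpha-1)/\alpha}$. Splitting the spatial integral at $|y|=|x|/2$: on $\{|y|>|x|/2\}$ the factor $\sup_{|y|>|x|/2}\psi(y)$ is arbitrarily small and \eqref{Eq3:tmp2} closes the estimate, while on $\{|y|\le|x|/2\}$ we use $|R_i\nabla p(t-s,x-y)|\lesssim (t-s)^{-1/\alpha}|x|^{-2}$ from Proposition \ref{prop:estRp} and Hölder's inequality to produce a bound of order $|x|^{1-\alpha}$, which vanishes since $\alpha>1$.
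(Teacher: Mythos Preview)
Your argument is correct and follows essentially the same strategy as the paper: decompose via Duhamel, handle the linear term by Proposition~\ref{prop:RPthetaest}, and control the nonlinear integral using Lemma~\ref{lem:uppercrit} combined with the decay \eqref{eq:limPtf}--\eqref{eq:limPtf3} and the key estimate \eqref{Eq3:tmp2}. The only differences are tactical: for $t\to\infty$ you split at $s=t/2$ and use the specific H\"older pair $(\tfrac{2}{3-\alpha},\tfrac{2}{\alpha-1})$ on the early piece (the paper splits at a fixed $T_\delta$ and uses a general $p>\tfrac{2}{\alpha-1}$ via \eqref{Eq1:tmp1}), and for $|x|\to\infty$ you first reduce to a compact time interval and take a moving spatial cutoff at $|x|/2$, whereas the paper treats all $t>0$ directly with a fixed cutoff $r_\varepsilon$.
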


\begin{proof} We will use the equality \eqref{eq:Duhamel}.
The required results for the term $R_iP_t\theta_0(x)$ have been provided in Proposition \ref{prop:RPthetaest}, so what has left is to deal with the integral term.
  By (\ref{eq:uppercrit}) and (\ref{eq:limPtf}),  for every $\delta>0$ there are  $t_\delta, T_\delta>0$ such that $\|\theta(s,\cdot)\|_{\infty}<\delta s^{-(\alpha-1)/\alpha}$ for $s<t_\delta$ or $s > T_\delta$. We fix some $p>\frac{2}{\alpha-1}$.
	%large enough, say $p>\frac{2}{(2-\alpha) \land (\alpha-1)}$. 
Consequently, by \eqref{Eq3:tmp2},
\begin{align}\label{eq1:limRtheta}
\left|\int_0^t\int_{\R^2}R_i\nabla p(t-s,x,y)\cdot \Rp\theta_s(y)\theta(s,y)dy\,ds\right| \le \delta c t^{-\frac{\alpha-1}{\alpha}}, \quad x \in \RR^d,\; t\le t_\delta,
\end{align}
which gives the first limit in \eqref{eq:limRtheta}.
Now, let $t > 2T_\delta$. 
By \eqref{Eq3:tmp2},  we get 
\begin{align*}
\left|\int_{T_\delta}^t\int_{\R^2}R_i\nabla p(t-s,x,y)\cdot \Rp\theta_s(y)\theta(s,y)dy\,ds\right| \le \delta c t^{-\frac{\alpha-1}{\alpha}}, \quad x \in \RR^d,\; t>{T_\delta}.
\end{align*}
Next, by \eqref{Eq1:tmp1} (with $f= R_i\nabla p$ and $g= \theta\Rp\theta$) and \eqref{eq:thetaLpnorm},
\begin{align*}
\left|\int_{0}^{T_\delta}\int_{\R^2}R_i\nabla p(t-s,x,y)\cdot \Rp\theta(s,y)\theta(s,y)dy\,ds\right|
&\lesssim \int_{0}^{T_\delta} (t-s)^{-\frac1\alpha-\frac2{\alpha p}}s^{-\frac{2(\alpha-1)}{\alpha} +\frac2{\alpha p}} \,ds\\
& \lesssim t^{-\frac1\alpha-\frac2{\alpha p}}\int_0^{T_\delta}	 s^{-1+\frac{p(2-\alpha)+2}{\alpha p}}ds\\
& = c t^{-\frac{\alpha-1}{\alpha}- \frac{p(2-\alpha)+2}{\alpha p}}.
\end{align*}
This proves  the second limit in \eqref{eq:limRtheta}. Finally, we deal with $\lim_{|x|\rightarrow\infty}\sup_{t>0}\left|t^{\frac{\alpha-1}\alpha}R\theta(t,x)\right|=0$. 
%In view of \eqref{eq1:limRtheta}, it is enough to prove that for any $0<t_0<\infty$,
%$$\lim_{|x|\rightarrow\infty}\sup_{t>t_0}\left|R\theta(t,x)\right|=0.$$ 
%So, fix $0<t_0<\infty$ and consider $ t>t_0$.
By  (\ref{eq:uppercrit}) and (\ref{eq:limPtf3}), for every $\varepsilon\in (0,1)$ there exists $r_{\varepsilon}$  such that $\sup_{s>0}|s^{\frac{\alpha-1}\alpha}\theta(s,y)|<\varepsilon$ for $|y|>r_{\varepsilon}$. Then, by \eqref{Eq3:tmp2},
\begin{align*}
\left|\int_0^t\int_{B(0,r_\varepsilon)^c} R_i\nabla p(t-s,x,y)\cdot R^\perp\theta(s,y)\theta(s,y)dy\,ds\right| \le \varepsilon c t^{-\frac{\alpha-1}{\alpha}}.
\end{align*}
Furthermore,  by (\ref{eq:estRp}), 
$$|R_i\nabla p(t-s,x,y)| \le c (t-s)^{-\frac{1}{\alpha}} |x-y|^{-2} < \varepsilon r_\varepsilon^{-2} (t-s)^{-\frac{1}{\alpha}}$$
 for $y \in B(0,r_\eps)$ and $|x|$ sufficiently large. Hence, by \eqref{eq:thetaLpnorm}  and (\ref{eq:RthetaLinfty}), we get 
\begin{align*}
\left|\int_0^{t}\int_{B(0,r_\varepsilon)}R_i\nabla p(t-s,x,y)\cdot R^\perp\theta(s,y)\theta(s,y)dy\,ds\right| 
&\le  \varepsilon r_\varepsilon^{-2} \int_0^{t} \int_{B(0,r_\varepsilon)}(t-s)^{-\frac{1}\alpha} s^{-\frac{2\alpha-2}{\alpha}} dy\,ds\\
& \le   c\,  \varepsilon t^{-\frac{\alpha-1}{\alpha}},
\end{align*}
which  ends the proof.

%
%\tj{tutaj jest problem!}
%\begin{align*}
%\left|\int_0^t\int_{B(0,r_\varepsilon)}R_i\nabla p(t-s,x,y)\cdot R^\perp\theta(s,y)\theta(s,y)dy\,ds\right| 
%&\lesssim \varepsilon r_{\varepsilon}^d\int_0^{t} s^{-\frac{\alpha-1}{\alpha}}  (t-s)^{-\tj{\frac{2}\alpha}}  s^{-\frac{\alpha-1}{\alpha}} ds \tj{=\infty}\\
%&=  \varepsilon r_{\varepsilon}^d t^{-\frac{\alpha-1}{\alpha}} \mathcal{B}\(\frac{\alpha-1}{\alpha}, \frac{2-\alpha}{\alpha}\).
%\end{align*}
%\tj{Mozna rozbic calke $\int_0^t = \int_0^{(1-\eta)t}+ \int_{(1-\eta)t}^t$.}
%This ends the proof.

\end{proof}
\begin{proof}[\bf Proof of Theorem \ref{thm:limt}.] First, observe that by (\ref{eq:uppercrit}) and semi-group property of $p(t,x)$, 
\begin{align}
&\left|\int_0^t\int_{\R^2}\nabla p(t-s,x-y)\cdot R\theta(s,y)\theta(s,y)dy\,ds\right| \notag\\
&\lesssim \int_0^t(t-s)^{-\frac1\alpha}\norm{R\theta(s,\cdot)}_\infty \int_{\R^2} p(t-s,x-y)P_s\theta_0(y)dy\,ds \notag\\
&= P_t\theta_0(x) \int_0^t(t-s)^{-\frac1\alpha}\norm{R\theta(s,\cdot)}_\infty ds. \label{eq1:limt}
\end{align}
By Proposition \ref{prop:Rtheta-lim}, for every $\varepsilon>0$ there are $t_\varepsilon>0$ and $T_\varepsilon$ such that $\|R\theta(t,\cdot)\|_\infty\le \varepsilon t^{-(\alpha-1)/\alpha}$ for $t<t_\varepsilon$ or $t>T_\varepsilon$. Hence, by \eqref{eq1:limt}, for $t<t_\varepsilon$, we have 
\begin{align*}
\left|\int_0^t\int_{\R^2}\nabla p(t-s,x-y)\cdot R\theta(s,y)\theta(s,y)dy\,ds\right| &\le \varepsilon P_t\theta_0(x)\int_0^t (t-s)^{-\frac1\alpha} s^{-\frac{\alpha-1}{\alpha}}ds \\
&=\varepsilon P_t\theta_0(x)\mathcal{B}\(\tfrac{\alpha-1}{\alpha}, \tfrac1\alpha\).
\end{align*}
Thus, \eqref{eq:Duhamel} gives us 
$$\left|\frac{\theta(t,x)}{P_t\theta_0(x)}-1\right|\lesssim \varepsilon,$$
which proves the first limit in \eqref{eq:limt}. Similarly, we get for $t>2T_\varepsilon$,
\begin{align*}
&\left|\int_0^t\int_{\R^2}\nabla p(t-s,x-y)\cdot R\theta_s(y)\theta(s,y)dy\,ds\right|\\
&\le  P_t\theta_0(x) \(c\int_0^{T_\varepsilon} (t-s)^{-\frac1\alpha} s^{-\frac{\alpha-1}\alpha} ds + \varepsilon\int_{T_\varepsilon}^t (t-s)^{-\frac1\alpha}  s^{-\frac{\alpha-1}\alpha} ds\)\\
&\lesssim   P_t\theta_0(x)\(ct^{-\frac1\alpha}T_\varepsilon^{\frac{1}{\alpha}} + \varepsilon\mathcal{B}\(\tfrac{\alpha-1}{\alpha}, \tfrac1\alpha\)\),
\end{align*}
%\begin{align*}
%&\left|\int_0^t\int_{\R^2}\nabla p(t-s,x-y)\cdot R\theta_s(y)\theta(s,y)dy\,ds\right|\\
%&\lesssim \int_0^{T_\varepsilon}+\varepsilon\int_{T_\varepsilon}^t\(s^{-\frac{\alpha-1}{\alpha}}(t-s)^{-\frac1\alpha}\int_{\R^2} p(t-s,x-y)P_s\theta_0(y)dy\)ds\\
%&\lesssim   P_t\theta_0(x)\(t^{-\frac1\alpha}\int_0^{T_\varepsilon}s^{-\frac{\alpha-1}{\alpha}}ds+ \varepsilon\int_0^1u^{\frac{\alpha-1}{\alpha}}(1-u)^{-\frac1\alpha}du\),
%\end{align*}
which is less than $2\varepsilon \mathcal{B}\(\tfrac{\alpha-1}{\alpha}, \tfrac1\alpha\) P_t\theta_0(x)$ for $t$ large enough. Hence, we obtain the second limit in \eqref{eq:limt}.

Finally,  the previous limit lets us prove  $\lim_{|x|\rightarrow\infty}\sup_{t>0}\left|\frac{\theta(t,x)}{P_t\theta_0(x)}-1\right|=0$  by showing that
$$\lim_{|x|\rightarrow\infty}\sup_{0<t<T}\left|\frac{\theta(t,x)}{P_t\theta_0(x)}-1\right|=0$$
holds for any $T>0$. By (\ref{eq:limRtheta}), for every $\varepsilon>0$ there is $M>0$ such that $|t^{\frac{\alpha-1}\alpha}R^\perp \theta(t,x)|<\varepsilon$ for $|x|>M$. Hence, by (\ref{eq:estgradp}) and (\ref{eq:uppercrit}), we get 
\begin{align*}
&\left|\int_0^t\int_{|y|>M}\nabla p(t-s,x-y)\cdot R\theta(s,y)\theta(s,y)dy\,ds\right| \\
&\lesssim \varepsilon\int_0^t(t-s)^{-\frac1\alpha}s^{-\frac{\alpha-1}\alpha}\int_{\R^2}p(t-s,x-y)P_s\theta_0(y)dy\,ds = \varepsilon\mathcal{B}\(\tfrac{\alpha-1}{\alpha}, \tfrac1\alpha\) P_t\theta_0(x).
\end{align*}
Next, for $|x|>2M$ and $t<T$,  by (\ref{eq:estgradp}),
\begin{align*}
&\left|\int_0^t\int_{|y| \le M}\nabla p(t-s,x-y)\cdot R\theta(s,y)\theta(s,y)dy\,ds\right|\\
&\lesssim \left|\int_0^ts^{-(\alpha-1)/\alpha}\int_{|y|\le M}\frac{1}{|x|^{\frac{1}{\alpha}}} p(t-s,x-y)P_s\theta_0(y)dy\,ds\right| 
\le \frac{\alpha T^{\frac{1}{\alpha}}}{|x|^{\frac{1}{\alpha}}} P_t\theta_0(x).
\end{align*}
This ends the proof.
\end{proof}

%\begin{cor}\label{cor:lowercrit}
%Let $\theta_0\in L^{\frac{2}{\alpha-1}}(\RR^2)$ be nonnegative. There is a constant $C=C(\theta_0,\alpha)>1$ such that 
%$$\theta(t,x)\geq CP_t\theta_0.$$ 
%\end{cor}
\begin{proof}[\bf Proof of Theorem \ref{thm:maincrit}]
The upper bound follows from Lemma \ref{lem:uppercrit}. To prove the lower one, note that  Lemma \ref{thm:limt} implies  $\theta(t,x)\gtrsim P_t\theta_0(x)$ whenever $t\in(0,t_0)\cup (T,\infty)$ or $|x|>R$ for some $t_0, T, R>0$. Since both, $\theta(t,x)$ and $ P_t\theta_0(x)$  are continuous, they are comparable on $\overline{[t_0,T]\times B(0,R)}$ as well.
\end{proof}
%\begin{rem*}Lemma \ref{lem:uppercrit} and Corollary \ref{cor:lowercrit} combined imply immediately Theorem \ref{thm:maincrit}.
%\end{rem*}
In the last part of this section,  we consider the case $\theta_0 \in L^p$ with $p > \frac{2}{\alpha-1}$. As a result, we  obtain  the local in time analog of Theorem \ref{thm:maincrit}.
By Remark 3.3 in \cite{CF}, for $p>\frac{2}{\alpha-1}$, we have
$$\|\theta(t,\cdot)\|_q \lesssim t^{-\frac{2}{\alpha}\(\frac1p-\frac1q\)}, \qquad  p \le q \le \infty.$$

\begin{proposition}
For nonnegative $\theta_0\in L^{p}(\RR^2)$, $p>\frac{2}{1-\alpha}$ and $T>0$ there are constants $C_1$ and  $C_2$ (depending on $T$) such that
\begin{align*}
C_1   P_t \theta_0(x) \le \theta(t,x) \le  C_2   P_t \theta_0(x), \qquad x \in \RR^2, 0<t \le T.
\end{align*}
\end{proposition}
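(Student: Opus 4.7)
The plan is to mimic the proofs of Lemma~\ref{lem:uppercrit} and Theorem~\ref{thm:maincrit} in the supercritical regime $\theta_0 \in L^p$ with $p > \frac{2}{\alpha-1}$, exploiting the restriction $t \le T$ to relax integrability conditions that were delicate in the critical case. For the upper bound, I would repeat the linearization scheme of Lemma~\ref{lem:uppercrit}: view $\theta$ as a solution of $\partial_t u = \Delta^{\alpha/2} u + v \cdot \nabla u$ with $v = \Rp\theta$, and apply \cite[Corollary 1.4]{2013-YM-HM-jfa}. The Morrey/Campanato conditions on $v$ are verified using the supercritical estimates $\|\theta(s,\cdot)\|_q \lesssim s^{-\frac{2}{\alpha}(\frac{1}{p} - \frac{1}{q})}$ (for $p \le q \le \infty$, from \cite[Remark~3.3]{CF}) combined with continuity of $R$ on $L^q$, $q \in (1,\infty)$. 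The Morrey exponents must be adjusted to match $p$ in place of $\frac{2}{\alpha-1}$, but on the bounded window $(0,T]$ the resulting temporal integrals remain finite.

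For the lower bound, I would work from the Duhamel representation
$$\theta(t,x) = P_t\theta_0(x) + I(t,x), \qquad I(t,x) = \int_0^t \int_{\RR^2} \nabla p(t-s,x-y) \cdot \Rp\theta(s,y)\,\theta(s,y)\, dy\, ds.$$
Combining \eqref{eq:estgradp} in the form $|\nabla p(t-s,x-y)| \lesssim (t-s)^{-1/\alpha} p(t-s,x-y)$, the just-established upper bound $\theta(s,y) \le C_2 P_s\theta_0(y)$, and the semigroup identity $P_{t-s} P_s \theta_0 = P_t\theta_0$, one obtains
$$|I(t,x)| \lesssim P_t\theta_0(x) \int_0^t (t-s)^{-1/\alpha} \|R\theta(s,\cdot)\|_\infty\, ds.$$
A supercritical analogue of Proposition~\ref{prop:Rtheta-est}, proven by the same Duhamel--H\"older scheme but with $\|R_i p(t,\cdot)\|_{p/(p-1)} \lesssim t^{-2/(\alpha p)}$ replacing the critical rate, should yield $\|R\theta(s,\cdot)\|_\infty \lesssim s^{-2/(\alpha p)}$. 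The beta integral then gives $|I(t,x)| \lesssim t^{1-1/\alpha-2/(\alpha p)} P_t\theta_0(x)$, whose prefactor vanishes as $t \to 0$ precisely because $p > \frac{2}{\alpha-1}$. Hence $|I(t,x)| \le \frac{1}{2} P_t\theta_0(x)$ for $t \in (0,t_0]$ for some $t_0 \in (0,T]$, giving $\theta(t,x) \ge \frac{1}{2} P_t\theta_0(x)$ on this initial slab.

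To extend the lower bound to $t \in [t_0, T]$, I would follow the compactness argument of Theorem~\ref{thm:maincrit}: both $\theta$ and $P_t\theta_0$ are continuous and strictly positive on $[t_0, T] \times \RR^2$ (positivity of $\theta$ propagating from the initial slab via continuity and the upper bound), so they are comparable on any compact set $[t_0, T] \times \overline{B(0, R)}$. For $|x| > R$ and $t \in [t_0, T]$ a separate tail estimate is required, analogous to the third limit in Proposition~\ref{prop:Rtheta-lim}, combining \eqref{eq:estRp}, \eqref{eq:estgradp} and the $L^q$ decay of $R\theta$ to bound $|I(t,x)|$ by a small multiple of $P_t\theta_0(x)$ in the tail region. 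The main technical obstacle I anticipate is this supercritical analogue of Proposition~\ref{prop:Rtheta-est}, including its $|x| \to \infty$ part: because $R$ is nonlocal one must split the $y$-integration into a core and a tail and apply \eqref{eq:estRp} together with $L^q$ decay for $q > p$ on each piece, following the same pattern as the proof of Proposition~\ref{prop:Rtheta-lim} but with the exponents reshuffled throughout.
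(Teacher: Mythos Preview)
Your approach differs substantially from the paper's, which is considerably more direct. The paper linearizes as you do, setting $b = R^\perp\theta$, but then observes that the maximum principle $\|\theta(t,\cdot)\|_p \le \|\theta_0\|_p$ together with \eqref{eq:Rieszineq} already gives $\|b(t,\cdot)\|_p \le c\|\theta_0\|_p$ uniformly in $t$. A single H\"older estimate then shows
\[
\int_s^t \int_{\RR^2} \frac{p(u-s,z-x)}{(u-s)^{1/\alpha}}\, |b(u,z)|\, dz\, du \;\lesssim\; (t-s)^{1-\frac{2+p}{\alpha p}},
\]
and since $p > \frac{2}{\alpha-1}$ the exponent is positive, so $b$ lies in the Kato class $\mathcal{K}(\eta,Q)$ of \cite{2010-TJ-KS-jee} for arbitrarily small $\eta$. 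Theorems~2 and~3 of \cite{2010-TJ-KS-jee} then yield \emph{two-sided} local-in-time comparability of the fundamental solution $\tilde p$ with $p$, and both bounds on $\theta$ follow at once from $\theta(t,x) = \int \tilde p(t,x,y)\theta_0(y)\,dy$.

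By contrast, your route requires (i) reverifying the Morrey/Campanato hypotheses of \cite{2013-YM-HM-jfa} with adjusted exponents, which in any case yields only the upper bound; (ii) a supercritical analogue of Proposition~\ref{prop:Rtheta-est} to control $\|R\theta(s,\cdot)\|_\infty$; and (iii) an extension of the lower bound from the initial slab $(0,t_0]$ to $[t_0,T]$ via compactness plus a tail estimate. Step~(iii) is where the substance lies, and your sketch leaves it essentially open: the compactness argument presupposes strict positivity of $\theta$ on $[t_0,T]\times\overline{B(0,R)}$, which you have not established (in the critical case this came from Theorem~\ref{thm:limt}, unavailable here; ``continuity and the upper bound'' do not by themselves propagate positivity forward in time), and the tail estimate you yourself flag as the ``main technical obstacle'' is a nontrivial analogue of Proposition~\ref{prop:Rtheta-lim}. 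The paper's Kato-class route bypasses all of this, delivering both inequalities for the fundamental solution simultaneously from nothing more than the uniform $L^p$ bound on $b$, with no $L^\infty$ control of $R\theta$ required at all.
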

\begin{proof}
Let $T>0$. Let us consider the equation
$$
\begin{cases}
\partial_t u &= \Delta^{\alpha/2} u + b(t,x) \cdot \nabla u\\
u(0,x) &= \theta_0(x), 
\end{cases}
$$
where $b(t,x) = (\Rp \theta)(t,x)$.  Of  course $u(t,x) = \theta(t,x)$ is a solution to the above equation. By (\ref{eq:Rieszineq}), we have
\begin{align*}
\norm{b(t,\cdot)}_p  \le c \norm{\theta(t,\cdot)}_p \le c \norm{\theta_0}_p. 
\end{align*}
By H\"older inequality,
\begin{align*}
&\int_s^t \int_{\RR^2} \frac{p(u-s,z-x)}{(u-s)^{1/\alpha}} | b(u,z)| dzdu \le \int_s^t \frac{1}{(u-s)^{1/\alpha}} \norm{p(u-s,\cdot)}_\frac{p}{p-1}\, \norm{b(u,\cdot)}_p \,du\\
&\le c \int_s^t \frac{1}{(u-s)^{1/\alpha}} (u-s)^{\frac{2}{\alpha}(\frac{p-1}{p}-1)} du = c \int_s^t (u-s)^{-\frac{2}{\alpha p} - \frac{1}{\alpha}} du = c_1(t-s)^{1-\frac{2+p}{\alpha p}}.
\end{align*}
In the same way, we get
\begin{align*}
&\int_s^t \int_{\RR^2} \frac{p(t-u,z-x)}{(t-u)^{1/\alpha}} | b(u,z)| dzdu \le  c_1(t-s)^{1-\frac{2+p}{\alpha p}}.
\end{align*}
Note that $\frac{2+p}{\alpha p}< 1$, and consequently  $c(t-s)^{1-\frac{2+q}{\alpha q}} \le \eta + \beta (t-s)$ for arbitrary small $\eta$  and some $\beta >0$. Hence, $b$ belongs to the class $\mathcal{K}(\eta,Q)$ (see \cite[Definition 1]{2010-TJ-KS-jee}. Then, by \cite[Theorems 2 and 3]{2010-TJ-KS-jee}, the fundamental solution of the equation $\partial_t u = \Delta^{\alpha/2} u + b(t,x) \cdot \nabla u$ is locally in time comparable with $p$ and we get the assertion of the proposition.

\end{proof}

\section{Gradient estimates}

In this section we derive the pointwise estimates for $\nabla^\k \theta$. 
Recall that for a multi-index $\k = (k_1,k_2)\in \N^2$, we put $|\k| = k_1+k_2$. Note that
\begin{align}\label{eq:summulti}
	\nabla^\k (fg) = \sum_{\mathbf{m} + \mathbf{n} = \k} c_{\mathbf{m},\mathbf{n}}\nabla^{\mathbf{m}}f \;\nabla^\mathbf{n}g,
\end{align}
where the sum is taken over all multi-indices $\mathbf{m}$ and $\mathbf{n}$ such that $\mathbf{m}+\mathbf{n}=\k$.

\begin{lemma}For $\theta_0\in L^{\frac2{\alpha-1}}$, we have
\begin{align}\label{eq:aux1}
\norm{\nabla^\k R_i\theta(t,\cdot)}_\infty\lesssim t^{-\frac{|\k|+\alpha-1}\alpha}, \qquad i=1,2.
\end{align}
\end{lemma}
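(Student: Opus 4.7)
\medskip\noindent\textbf{Proof proposal.} The plan is to induct on $|\k|$. The base case $|\k|=0$ is precisely \eqref{eq:RthetaLinfty} from Proposition~\ref{prop:Rtheta-est}. For the inductive step, suppose the estimate holds for every multi-index of order strictly less than $|\k|$ (for both $R_1$ and $R_2$, and hence for $R^\perp$). Applying $\nabla^\k R_i$ to the Duhamel formula \eqref{eq:Duhamel} and commuting $R_i$ past differentiation and convolution yields
\begin{align*}
\nabla^\k R_i\theta(t,x)=\nabla^\k R_iP_t\theta_0(x)+\int_0^t\!\int_{\R^2} R_i\nabla^\k\nabla p(t-s,x-y)\cdot R^\perp\theta(s,y)\,\theta(s,y)\,dy\,ds.
\end{align*}
The linear part is controlled by the scaled form of \eqref{eq:estRp}, which gives $\|R_i\nabla^\k p(t,\cdot)\|_{2/(3-\alpha)}\lesssim t^{-(|\k|+\alpha-1)/\alpha}$; H\"older's inequality against $\theta_0\in L^{2/(\alpha-1)}$ then produces the required bound, mirroring the argument of Proposition~\ref{prop:RPthetaest}.

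For the nonlinear integral I would split the time interval at $t/2$. On $(0,t/2)$ all $x$-derivatives may stay on $p$, so H\"older's inequality with conjugate exponents $p/(p-1)$ and $p$ (for some $p>2/(\alpha-1)$) combined with $\|\nabla^\k R_i\nabla p(t-s,\cdot)\|_{p/(p-1)}\lesssim(t-s)^{-(|\k|+1+2/p)/\alpha}$ (from \eqref{eq:estRp}) and $\|R^\perp\theta(s,\cdot)\,\theta(s,\cdot)\|_p\lesssim s^{-(2\alpha-2)/\alpha+2/(\alpha p)}$ (from \eqref{eq:RthetaLinfty} and \eqref{eq:thetaLpnorm}) gives, after a routine Beta-type time integration, the bound $t^{-(|\k|+\alpha-1)/\alpha}$.

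On $(t/2,t)$ the same direct estimate diverges at $s=t$, so I would integrate by parts in the spatial variable, swapping $\nabla^\k_x$ for $(-1)^{|\k|}\nabla^\k_y$ on the kernel and then shifting those $y$-derivatives onto the product, reducing the integrand to $R_i\nabla p(t-s,x-y)\cdot\nabla^\k(R^\perp\theta\cdot\theta)(s,y)$. Leibniz's rule \eqref{eq:summulti} expands this into terms $\nabla^{\mathbf m}R^\perp\theta\cdot\nabla^{\mathbf n}\theta$ with $\mathbf m+\mathbf n=\k$. For the subcritical pieces $|\mathbf m|<|\k|$, the induction hypothesis supplies $\|\nabla^{\mathbf m}R^\perp\theta(s,\cdot)\|_\infty\lesssim s^{-(|\mathbf m|+\alpha-1)/\alpha}$, which together with the $L^p$ bound on $\nabla^{\mathbf n}\theta$ from \eqref{eq:thetaLp} produces $\|\nabla^{\mathbf m}R^\perp\theta\cdot\nabla^{\mathbf n}\theta\|_p\lesssim s^{-(|\k|+2\alpha-2)/\alpha+2/(\alpha p)}$.

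The main obstacle is the critical term $\mathbf m=\k$, $\mathbf n=0$, where induction says nothing and $R_i$ fails to map $L^\infty$ into $L^\infty$. The workaround is to invoke \eqref{eq:Rieszineq} together with \eqref{eq:thetaLp} to get $\|\nabla^\k R^\perp\theta(s,\cdot)\|_p\lesssim s^{-(|\k|+\alpha-1)/\alpha+2/(\alpha p)}$ and pair it with $\|\theta(s,\cdot)\|_\infty\lesssim s^{-(\alpha-1)/\alpha}$ from \eqref{eq:thetaLpnorm}, recovering the same $L^p$ product bound. Coupling this with $\|R_i\nabla p(t-s,\cdot)\|_{p/(p-1)}\lesssim(t-s)^{-(1+2/p)/\alpha}$, the $s$-integral on $(t/2,t)$ converges precisely when $1+2/p<\alpha$, i.e.\ when $p>2/(\alpha-1)$ (so the admissible range is non-empty because of the strict inequality), and reproduces the bound $t^{-(|\k|+\alpha-1)/\alpha}$. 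Summing the linear contribution and the two nonlinear pieces then closes the induction.
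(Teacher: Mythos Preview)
Your proposal is correct and follows essentially the same route as the paper: split the nonlinear Duhamel integral at $t/2$, keep all $x$-derivatives on the kernel on $(0,t/2)$, shift them onto the product via Leibniz on $(t/2,t)$, and estimate each Leibniz piece by the H\"older pairing $\|R_i\nabla p(t-s,\cdot)\|_{p/(p-1)}\cdot\|\cdot\|_p$ for some $p>\tfrac{2}{\alpha-1}$. The one structural difference is that you organize the argument as an induction on $|\k|$, invoking the induction hypothesis to bound $\|\nabla^{\mathbf m}R^\perp\theta(s,\cdot)\|_\infty$ for the subcritical Leibniz terms $|\mathbf m|<|\k|$, whereas the paper gives a direct, non-inductive proof: the very workaround you devise for the critical term $\mathbf m=\k$---bounding $\|R^\perp\nabla^{\mathbf m}\theta(s,\cdot)\|_p$ via \eqref{eq:Rieszineq} and \eqref{eq:thetaLp} and pairing it with $\|\nabla^{\mathbf n}\theta(s,\cdot)\|_\infty$ from \eqref{eq:thetaLp}---in fact works uniformly for \emph{every} pair $\mathbf m+\mathbf n=\k$, so the induction is unnecessary. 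Your argument is thus sound but slightly longer than needed; the paper's version just treats all Leibniz terms at once. (A further minor difference: on $(0,t/2)$ the paper uses the endpoint conjugate pair $\tfrac{2}{3-\alpha},\tfrac{2}{\alpha-1}$ rather than $\tfrac{p}{p-1},p$, but either choice closes the estimate.)
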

\begin{proof}
Let us rewrite (\ref{eq:Duhamel}) as follows,
\begin{align}\label{eq:Duhamel2}
\theta(t,\cdot)(x)&=\int_{\RR^2}p(t,x-y)\theta_0(y)dy+\int_0^{t/2}\int_{\R^2}\nabla p(t-s,x-y)\cdot R^\perp\theta(s,y)\theta(s,y)dy\,ds\\\nonumber
&+\int_{t/2}^t\int_{\R^2} \nabla p(t-s,y)\cdot R^\perp\theta(s,x-y)\theta(s,x-y)dy\,ds.
\end{align}
Since the Riesz transform commutes with derivatives, by \eqref{eq:Duhamel2} and  \eqref{eq:summulti}, we get 
\begin{align}\label{eq:nablaDuhamel}
&\nabla^\k R_i\theta(t,x)\nonumber\\
&=\int_{\RR^2}R_i\nabla^\k p(t,x-y)\theta_0(y)dy+\int_0^{t/2}\int_{\R^2}R_i\(\nabla^\k \nabla p(t-s,x-y)\)\cdot R^\perp\theta(s,y)\theta(s,y)dy\,ds\\\nonumber
&\ \ \ +\sum_{\ki+\kii=\k}c_{\ki,\kii}\int_{t/2}^t\int_{\R^2} R_i\(\nabla p(t-s,y)\)\cdot R^\perp\(\nabla^{\ki}\theta(s,x-y)\)\nabla^{\kii}\theta(s,x-y)dy\,ds,
\end{align}
where $\ki,\kii\in\N^2$. Hence, by H\"older inequality, \eqref{eq:thetaLpnorm}, \eqref{eq:lpestp}, \eqref{eq:Rieszineq}  and \eqref{eq:thetaLp}, for $p>\frac{2}{\alpha-1}$, we get 
\begin{align*}
\|\nabla^\k R_i\theta(t,\cdot)\|_\infty&\lesssim  \|\nabla^\k p(t,\cdot)\|_{\frac{2}{3-\alpha}} \, \|\theta_0\|_{\frac{2}{\alpha-1}}\\
&\ \ \ +\int_0^{t/2}s^{-\frac{\alpha-1}\alpha} \|\nabla^\k\nabla p(t-s,\cdot)\|_{\frac{2}{3-\alpha}} \, \|\theta(s,\cdot)\|_{\frac{2}{\alpha-1}}\,ds\\
&\ \ \ +\sum_{\ki+\kii=\k} c_{\ki,\kii} \int_{t/2}^t\| \nabla p(t-s,\cdot)\|_\frac{p}{p-1}\|(R^\perp\nabla^{\ki}\theta(s,\cdot)\|_{p}\,\norm{\nabla^{\kii}\theta(s,\cdot)}_\infty\,ds\\
&\lesssim  t^{-\frac{|\k|+\alpha-1}\alpha}+\int_0^{t/2}s^{-\frac{\alpha-1}\alpha} \, t^{-\frac{|\k|+1+\alpha}\alpha} \,ds\\
&\ \ \ +\sum_{\ki+\kii=\k} c_{\ki,\kii} \int_{t/2}^t (t-s)^{-\frac1\alpha-\frac{2}{\alpha p}} \, t^{-\frac{|\ki|}\alpha-\frac{\alpha-1}\alpha + \frac2{\alpha p}} \, t^{-\frac{|\kii|+\alpha-1}\alpha} \,ds\\[8pt]
&\lesssim  t^{-\frac{|\k|+\alpha-1}\alpha},
\end{align*}
as required.
\end{proof}
Next, we present a series of  auxiliary lemmas that are used in the proof of Theorem \ref{thm:gradest}.
\begin{lemma}\label{lem:PtthetaLow}
Let $0 < t_1 < t_2<\infty$ and $\theta_0 \in L^{\frac2{\alpha-1}}$. If $\norm{\theta_0}_{\frac2{\alpha-1}}>0$, then, there exists a constant $C= C(t_1,t_2,\theta_0)$ such that
\begin{align*}
	P_t |\theta_0(x)| \ge \frac{C}{(1+|x|)^{2+\alpha}}, \qquad t_2>t>t_1,\; x \in \RR^2.
\end{align*}
\end{lemma}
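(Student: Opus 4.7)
The plan is to exploit the explicit lower bound in \eqref{eq:estp} by localizing the integral defining $P_t|\theta_0|(x)$ to a ball where $|\theta_0|$ carries positive mass. Since the time interval $[t_1,t_2]$ is bounded and we only need bounds depending on $t_1, t_2$, this reduces to a short calculation.

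First, since $\|\theta_0\|_{\frac{2}{\alpha-1}}>0$, the function $|\theta_0|$ is not a.e.\ zero. By monotone convergence, the map $R\mapsto \int_{B(0,R)}|\theta_0(y)|\,dy$ is increasing and non-zero in the limit $R\to\infty$, so I can fix $R=R(\theta_0)>0$ large enough that
\[
c_0:=\int_{B(0,R)}|\theta_0(y)|\,dy>0.
\]
Note that $c_0<\infty$ because $B(0,R)$ has finite measure and $|\theta_0|\in L^{2/(\alpha-1)}_{\mathrm{loc}}$ (apply H\"older's inequality).

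Second, for $y\in B(0,R)$, $x\in\RR^2$ and $t\in[t_1,t_2]$, the triangle inequality gives
\[
t^{1/\alpha}+|x-y|\leq t_2^{1/\alpha}+|x|+R\leq C_1\bigl(1+|x|\bigr),
\]
where $C_1=C_1(t_2,R):=\max\{1,t_2^{1/\alpha}+R\}$. Combining this with the lower bound in \eqref{eq:estp}, namely $p(t,z)\geq c\,t/(t^{1/\alpha}+|z|)^{2+\alpha}$, I obtain
\[
p(t,x-y)\;\geq\;\frac{c\,t}{(t^{1/\alpha}+|x-y|)^{2+\alpha}}\;\geq\;\frac{c\,t_1}{C_1^{2+\alpha}(1+|x|)^{2+\alpha}},\qquad y\in B(0,R),\ t\in[t_1,t_2].
\]

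Third, integrating over $B(0,R)$ and using the lower bound for $p$ uniformly in $y\in B(0,R)$,
\[
P_t|\theta_0|(x)\;\geq\;\int_{B(0,R)}p(t,x-y)\,|\theta_0(y)|\,dy\;\geq\;\frac{c\,t_1\,c_0}{C_1^{2+\alpha}}\cdot\frac{1}{(1+|x|)^{2+\alpha}},
\]
which is the desired bound with $C=c\,t_1\,c_0\,C_1^{-(2+\alpha)}$ depending on $t_1,t_2,\theta_0$. There is no serious obstacle; the only slightly non-trivial point is the choice of $R$ that guarantees $c_0>0$, and this is immediate from the fact that $\theta_0$ is a non-zero element of $L^{2/(\alpha-1)}$.
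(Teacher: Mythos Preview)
Your proof is correct and follows essentially the same approach as the paper: both localize the convolution to a ball $B(0,R)$ on which $|\theta_0|$ has positive $L^1$ mass (using that $\theta_0\in L^{2/(\alpha-1)}\subset L^1_{\mathrm{loc}}$), invoke the lower bound in \eqref{eq:estp}, and use $t\ge t_1$ together with $t^{1/\alpha}+|x-y|\lesssim 1+|x|$ for $t\le t_2$, $|y|\le R$. Your handling of the constant $C_1$ is slightly more explicit, but there is no substantive difference.
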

\begin{proof}
Since $\theta_0 \in L^{\frac{2}{\alpha-1}}$, then $\theta_0 \in L^1_{loc}$. Hence, there is $R>0$ such that $C<\int_{B(0,R)} |\theta_0(y)| dy <\infty$  for some $c>0$. By \eqref{eq:estp}, we get
\begin{align*}
P_t |\theta_0|(x) \ge \int_{B(0,R)} p(t,x-y) |\theta_0(y)|dy &\ge  c_1 \frac{t}{(t^{1/\alpha}+ 2R+ |x|)^{2+\alpha}} \int_{B(0,R)} |\theta_0(y)|dy \\
&\ge c c_1 \frac{t_1}{(2t_2^{1/\alpha}+ 2R+|x|)^{2+\alpha}} \ge \frac{C}{(1+|x|)^{2+\alpha}}.
\end{align*} 
\end{proof}

\begin{lemma}\label{lem:bbPtheta}
Let $\theta_0 \in L^{\frac{2}{\alpha-1}}$. Let $0<t_1<t_2 < \infty$ . There exists a constant $C$ depending on $t_1,t_2,R$ and $\theta_0$  such that for  $x \in \RR^2$, we have
\begin{align*}
\int_{D_t}\int_{B(0,R)} (t-s)^{-1/\alpha}  p(t-s,x-y)s^{-(\alpha-1)/\alpha}|\nabla^{\k}\theta(s,y)|dy\,ds \le C t^{-|\k|/\alpha} P_t |\theta_0|(x),
\end{align*}
where $D_t=(t_1,t_2) \cap (t/2,t)$.
\end{lemma}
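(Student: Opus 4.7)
The plan is to exploit the fact that the integration region $D_t\subseteq[t_1,t_2]$ is bounded and bounded away from $0$, so that every factor depending on $s$ (including $s^{-(\alpha-1)/\alpha}$, $|\nabla^{\k}\theta(s,y)|$, and eventually the extra $t^{-|\k|/\alpha}$) is merely a harmless constant. Thus the real work reduces to showing that the kernel part
\[
J(t,x):=\int_{D_t}(t-s)^{-1/\alpha}\int_{B(0,R)}p(t-s,x-y)\,dy\,ds
\]
is dominated by $P_t|\theta_0|(x)$, uniformly in $x\in\RR^2$ and in the relevant values of $t$.

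First I would note that $D_t$ is empty unless $t_1<t<2t_2$, in which case $t^{-|\k|/\alpha}$ is bounded above and below by positive constants depending only on $t_1,t_2,|\k|$. For $s\in D_t\subseteq[t_1,t_2]$, the factor $s^{-(\alpha-1)/\alpha}$ is bounded, and by \eqref{eq:thetaLp} with $p=\infty$ we have $\|\nabla^{\k}\theta(s,\cdot)\|_\infty\lesssim s^{-(\alpha-1+|\k|)/\alpha}$, which is also bounded on $[t_1,t_2]$ by a constant $C(t_1,\k)$. Pulling these bounded factors outside reduces the estimate to showing $J(t,x)\lesssim(1+|x|)^{-(2+\alpha)}$.

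To bound $J(t,x)$ I split into two cases. For $|x|\le 2R$, I use the trivial bound $\int_{B(0,R)}p(t-s,x-y)\,dy\le 1$ and observe that since $D_t\subseteq(t/2,t)$ and $1-1/\alpha>0$,
\[
J(t,x)\le\int_{t/2}^{t}(t-s)^{-1/\alpha}ds=\frac{(t/2)^{1-1/\alpha}}{1-1/\alpha}\le C(t_2),
\]
which is dominated by $(1+2R)^{2+\alpha}(1+|x|)^{-(2+\alpha)}$. For $|x|>2R$, every $y\in B(0,R)$ satisfies $|x-y|\ge|x|/2$, so by \eqref{eq:estp}
\[
p(t-s,x-y)\lesssim\frac{t-s}{|x-y|^{2+\alpha}}\lesssim\frac{t-s}{|x|^{2+\alpha}},
\]
giving $\int_{B(0,R)}p(t-s,x-y)\,dy\lesssim R^2(t-s)|x|^{-(2+\alpha)}$. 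Hence
\[
J(t,x)\lesssim\frac{R^2}{|x|^{2+\alpha}}\int_{D_t}(t-s)^{1-1/\alpha}ds\lesssim\frac{R^2\,(t_2)^{2-1/\alpha}}{|x|^{2+\alpha}}\lesssim\frac{1}{(1+|x|)^{2+\alpha}}.
\]

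The final step is to apply Lemma \ref{lem:PtthetaLow} on the enlarged interval $(t_1,2t_2)$, which yields $(1+|x|)^{-(2+\alpha)}\lesssim P_t|\theta_0|(x)$ uniformly for $t\in(t_1,2t_2)$ and $x\in\RR^2$. Combining everything and absorbing the bounded factor $t^{|\k|/\alpha}$ into the constant produces the claimed bound $C\,t^{-|\k|/\alpha}P_t|\theta_0|(x)$. The main (and really the only) point requiring care is the singularity of $(t-s)^{-1/\alpha}$ at $s=t$, but this is integrable because $\alpha>1$; once that observation is in hand, everything else is an elementary separation of the bounded $s$-variables from the kernel-plus-space integral.
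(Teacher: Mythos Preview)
Your proof is correct. Both your argument and the paper's reduce the task to showing
\[
\int_{D_t}(t-s)^{-1/\alpha}P_{t-s}\mathbbm{1}_{B(0,R)}(x)\,ds\lesssim(1+|x|)^{-(2+\alpha)}
\]
for $t\in(t_1,2t_2)$, and then finish by invoking Lemma~\ref{lem:PtthetaLow}. The difference is in how this kernel bound is obtained. You split into $|x|\le 2R$ and $|x|>2R$ and use the crude bounds $\int p\le 1$ and $p(t-s,x-y)\lesssim(t-s)|x|^{-(2+\alpha)}$ respectively. The paper instead uses a single semigroup trick: since $p(s,y)$ is bounded below by a positive constant on $(t_1,t_2)\times B(0,R)$, one has $\mathbbm{1}_{B(0,R)}(y)\le c_1 p(s,y)$, whence $P_{t-s}\mathbbm{1}_{B(0,R)}(x)\le c_1\int p(t-s,x-y)p(s,y)\,dy=c_1 p(t,x)\lesssim(1+|x|)^{-(2+\alpha)}$ in one stroke, without cases. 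Your approach is slightly more elementary (no Chapman--Kolmogorov identity needed), while the paper's is shorter and explains conceptually why the spatial decay matches that of $p(t,\cdot)$.
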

\begin{proof}
Note that $D_t = \emptyset$ for $t \notin (t_1,2t_2)$, hence, it suffices to consider only $t_1<t<2t_2$.
By \eqref{eq:thetaLp}, 
\begin{align*}
&\int_{D_t}\int_{B(0,R)} (t-s)^{-\frac{1}{\alpha}}  p(t-s,x-y)s^{-(\alpha-1)/\alpha}|\nabla^{\k}\theta(s,y)|dy\,ds \\
& \le  c\int_{D_t}\int_{B(0,R)} (t-s)^{-\frac{1}{\alpha}}  p(t-s,x-y)s^{-\frac{\alpha-1}\alpha-\frac{\alpha-1}\alpha- \frac{|\k|}\alpha }dy\,ds \\
&\le  ct_1^{-\frac{2(\alpha-1)}{\alpha}} \big(\tfrac{t}{2}\big)^{-\frac{|\k|}\alpha } \int_{D_t}  (t-s)^{-\frac{1}{\alpha}}  P_{t-s} \mathbbm{1}_{B(0,R)}(x)\,ds =: f(t,x).
\end{align*}
Note that $p(s,y) \ge \frac{1}{c_1} >0$ for $(s,y) \in (t_1,t_2) \times B(0,R)$. Hence,
%Since $f(t,x)$ and $t^{-|\k|/\alpha}P_t\theta_0(x)$ are  continuous on $[t_1,t_2] \times \overline{B(0,2R)}$, they are comparable on this set. For $|x| >2R$ \tj{and $s \in (t_1,t_2)$}, we have
$$P_{t-s} \mathbbm{1}_{B(0,R)}(x)\le c_1\int_{\RR^2} p(t-s,x-y) p(s,y) dy = c_1 p(t,x) \le \frac{c_2}{(1+|x|)^{2+\alpha}},$$
Consequently, by Lemma \ref{lem:PtthetaLow},
\begin{align*}
f(t,x) \le c_3 t^{-\frac{|\k|}{\alpha}} \int_{t_1}^{t}  (t-s)^{-\frac{1}{\alpha}}  \frac{1}{(1+|x|)^{2+\alpha}}\,ds \le c_4t^{-\frac{|\k|}{\alpha}} P_t |\theta_0|(x).
	\end{align*}
\end{proof}

\begin{lemma}\label{lem:tech}Let $\beta>0$ be fixed. For any $v\in(0,1)$, we have
$$\int_v^1 r^{-\beta}(1-r^\alpha)^{-1/\alpha}(r^\alpha-v^\alpha)^{-1/\alpha}    dr\approx v^{-\beta}(1-v)^{1-2/\alpha}$$
with comparability constants depending only on $\alpha$ and  $\beta$.
\end{lemma}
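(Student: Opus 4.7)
The plan is to perform the substitution $r^\alpha = v^\alpha + (1-v^\alpha)s$, which converts the two-sided singularity into a product of standard Beta-type singularities on $(0,1)$, and then decompose the resulting integral according to which of the competing quantities $v^\alpha$ and $(1-v^\alpha)s$ dominates inside the bracket.

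\textbf{Step 1 (substitution).} With $s\in(0,1)$ we have $r^\alpha-v^\alpha=(1-v^\alpha)s$, $1-r^\alpha=(1-v^\alpha)(1-s)$ and $dr=\frac{1-v^\alpha}{\alpha}r^{1-\alpha}ds$. Combining $r^{-\beta}$ with the Jacobian factor $r^{1-\alpha}$ gives $r^{1-\alpha-\beta}=\bigl(v^\alpha+(1-v^\alpha)s\bigr)^{-\gamma}$ with $\gamma:=(\beta+\alpha-1)/\alpha>0$, so the integral becomes
\begin{equation*}
I(v)=\frac{(1-v^\alpha)^{1-2/\alpha}}{\alpha}\int_0^1 \bigl(v^\alpha+(1-v^\alpha)s\bigr)^{-\gamma}(1-s)^{-1/\alpha}s^{-1/\alpha}\,ds.
\end{equation*}

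\textbf{Step 2 (prefactor).} Since $v^\alpha\leq v$ forces $1-v^\alpha\geq 1-v$, and $1-v^\alpha=\alpha\int_v^1 t^{\alpha-1}dt\leq\alpha(1-v)$, one has $1\leq (1-v^\alpha)/(1-v)\leq\alpha$. Because $1-2/\alpha<0$, this yields $(1-v^\alpha)^{1-2/\alpha}\approx (1-v)^{1-2/\alpha}$ with constants depending only on $\alpha$. It therefore suffices to prove
\begin{equation*}
J(v):=\int_0^1 \bigl(v^\alpha+(1-v^\alpha)s\bigr)^{-\gamma}(1-s)^{-1/\alpha}s^{-1/\alpha}\,ds\;\approx\; v^{-\beta}.
\end{equation*}

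\textbf{Step 3 (analysis of $J(v)$).} For $v\geq 2^{-1/\alpha}$ the bracket is between $v^\alpha\geq 1/2$ and $1$, and $J(v)$ is comparable to the Beta integral $\mathcal{B}(1-1/\alpha,1-1/\alpha)$, while $v^{-\beta}\approx 1$. For $v<2^{-1/\alpha}$, the contribution from $s\in(1/2,1)$ is $O(1)$ since the bracket is bounded below and $(1-s)^{-1/\alpha}$ is integrable. On $s\in(0,1/2)$ one has $1-s\approx 1$, and we split at $s=v^\alpha$: on $(0,v^\alpha)$ the bracket is $\approx v^\alpha$, giving a contribution $\approx v^{-\alpha\gamma}\int_0^{v^\alpha}s^{-1/\alpha}ds\approx v^{-\alpha\gamma+\alpha-1}=v^{-\beta}$; on $(v^\alpha,1/2)$ the bracket is $\approx s$, and since $\gamma+1/\alpha=1+\beta/\alpha>1$, the integral $\int_{v^\alpha}^{1/2}s^{-\gamma-1/\alpha}ds$ is $\approx v^{\alpha(1-\gamma-1/\alpha)}=v^{-\beta}$. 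As $v\to 0$ these $v^{-\beta}$ pieces blow up and dominate the $O(1)$ contribution, yielding $J(v)\approx v^{-\beta}$.

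Multiplying $J(v)$ by the prefactor $(1-v^\alpha)^{1-2/\alpha}/\alpha$ completes the proof. The main technical nuisance is ensuring uniformity of the comparability constants across the transition at $v=2^{-1/\alpha}$ and, in the regime $v$ close to $1$, making sure that the finite Beta integral produced in Step~3 is matched by $v^{-\beta}\approx 1$ rather than accidentally by a vanishing quantity; the two exponent identities $-\alpha\gamma+\alpha-1=-\beta$ and $\alpha(1-\gamma-1/\alpha)=-\beta$, together with $\gamma+1/\alpha>1$, are what make everything line up.
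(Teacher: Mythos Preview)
Your proof is correct and takes a genuinely different route from the paper's. The paper first simplifies the integrand via the elementary estimate $a^\gamma-b^\gamma\approx (a-b)a^{\gamma-1}$ (so that $1-r^\alpha\approx 1-r$ and $r^\alpha-v^\alpha\approx (r-v)r^{\alpha-1}$), reducing the problem to $\int_v^1 r^{1/\alpha-1-\beta}(1-r)^{-1/\alpha}(r-v)^{-1/\alpha}\,dr$, and then splits into the cases $v\ge 1/4$ (handled by the linear substitution $r=1-u(1-v)$, giving a Beta integral) and $v<1/4$ (split at $r=1/2$). Your approach instead performs the exact substitution $r^\alpha=v^\alpha+(1-v^\alpha)s$, which immediately factors out $(1-v^\alpha)^{1-2/\alpha}$ and converts both endpoint singularities into the standard Beta singularities $(1-s)^{-1/\alpha}$ and $s^{-1/\alpha}$; the remaining work is a clean comparison of the bracket $v^\alpha+(1-v^\alpha)s$ with $v^\alpha$ and with $s$. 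Both arguments are short; yours is arguably slightly cleaner because the substitution is exact rather than an approximation, whereas the paper's avoids tracking the bracket and keeps the variable of integration close to the original $r$. One minor quibble: on $(v^\alpha,1/2)$ the contribution is only $\lesssim v^{-\beta}$ uniformly (it tends to $0$ as $v^\alpha\to 1/2$), not $\approx v^{-\beta}$; but this does not affect the conclusion, since the lower bound $J(v)\gtrsim v^{-\beta}$ already follows from the $(0,v^\alpha)$ piece alone, and the upper bound only needs each piece to be $\lesssim v^{-\beta}$.
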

\begin{proof} Denote the above integral by $I(v)$. Since $a^\gamma-b^\gamma\approx (a-b)a^{\gamma-1}$ for $a>b>0$  and $\gamma>0$ (see e.g Lemma 4 in \cite{MS}), we have $1-r^\alpha\approx 1-r$ and $r^\alpha-v^\alpha\approx(r-v)r^{\alpha-1}$. Hence,
$$I(v) \approx \int_v^1 r^{1/\alpha-1-\beta}(1-r)^{-1/\alpha}(r-v)^{-1/\alpha}dr. $$
For $v\ge1/4$, we estimate $ r^{1/\alpha-1-\beta} \approx 1$ and substitute $r=1-u(1-v)$, which gives us
$$I(v) \approx (1-v)^{1-2/\alpha} \int_0^1 u^{-1/\alpha}(1-u)^{-1/\alpha}du= c(1-v)^{1-2/\alpha}.$$
In the case $v<1/4$, we split the integral into $\int_v^{1/2}+\int_{1/2}^1$ and obtain
\begin{align*}
I(v) &\approx \int_v^{1/2} r^{1/\alpha-1-\beta}(r-v)^{-1/\alpha}dr+ \int_{1/2}^1 (1-r)^{-1/\alpha}dr\\
&= v^{-\beta}\int_1^{1/(2v)} u^{1/\alpha-1-\beta}(u-1)^{-1/\alpha}du+\frac{\alpha2^{(\alpha-1)/\alpha}}{\alpha-1}\\ 
&\approx v^{-\beta}+1\approx v^{-\beta}, 
\end{align*}
which is equivalent to the required formula under current assumptions.
\end{proof}

Since $\alpha>1$, we immediately obtain the following 
\begin{cor}
\label{cor:estint} Let $\beta>0$ be fixed. There is a constant $C_\beta$ such that for $v\in(0,1)$, we have
$$\int_v^1 r^{-\beta}(1-r^\alpha)^{-1/\alpha}(r^\alpha-v^\alpha)^{-1/\alpha}    dr\le C_\beta v^{-\beta}(1-v)^{-1/\alpha}.$$
\end{cor}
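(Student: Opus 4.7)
The plan is to reduce the corollary to Lemma~\ref{lem:tech}. That lemma already supplies the sharp two-sided bound
$$\int_v^1 r^{-\beta}(1-r^\alpha)^{-1/\alpha}(r^\alpha-v^\alpha)^{-1/\alpha}\,dr \approx v^{-\beta}(1-v)^{1-2/\alpha},$$
so the only thing left to verify is that the upper estimate $v^{-\beta}(1-v)^{1-2/\alpha}$ is dominated (up to a $\beta$-dependent constant) by $v^{-\beta}(1-v)^{-1/\alpha}$, which is exactly the inequality asserted.

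To do this I would just compare the two exponents of $(1-v)$. Their difference is
$$(1-2/\alpha) - (-1/\alpha) = 1 - 1/\alpha,$$
which is strictly positive because $\alpha\in(1,2)$ is fixed throughout the paper. Since $1-v\in(0,1)$ and $p\ge q$ implies $(1-v)^p\le(1-v)^q$ for such bases, it follows that
$$(1-v)^{1-2/\alpha} = (1-v)^{-1/\alpha}\,(1-v)^{1-1/\alpha} \le (1-v)^{-1/\alpha}.$$
Multiplying by $v^{-\beta}$ and combining with the upper half of Lemma~\ref{lem:tech} yields the corollary with $C_\beta$ equal to the upper comparability constant produced by that lemma.

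There is no genuine obstacle here; the corollary is intentionally a looser reformulation of Lemma~\ref{lem:tech}, chosen so that the exponent on $(1-v)$ is exactly $-1/\alpha$ and hence integrable at $v=1$ for $\alpha>1$. This form is clearly tailored for the upcoming gradient estimates in Section~4, where one repeatedly integrates such factors against time-scaling weights, and having the unified exponent $-1/\alpha$ (rather than $1-2/\alpha$) simplifies those computations.
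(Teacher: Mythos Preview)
Your proposal is correct and matches the paper's approach exactly: the paper simply states ``Since $\alpha>1$, we immediately obtain the following'' before the corollary, which is precisely your observation that $1-2/\alpha\ge -1/\alpha$ on $(1,2)$ so that $(1-v)^{1-2/\alpha}\le(1-v)^{-1/\alpha}$, combined with the upper bound from Lemma~\ref{lem:tech}.
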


\begin{lemma}\label{lem:s-ie} 
Fix $\gamma \in (0,\frac1\alpha)$. For any measurable function $f \colon \RR \times \RR^2 \to \RR$, define the operator
\begin{align}\label{eq:T_g}
T_\gamma f(t,x)=t^{\gamma}\int_{0}^ts^{-\gamma-\frac{\alpha-1}{\alpha}}(t-s)^{-\frac{1}{\alpha}} P_{t-s}|f|(s,x)\,ds.
\end{align}
Suppose $T_\gamma f(t,x) < \infty$ and $f$ satisfies the inequality
\begin{align}\label{aux4}
f(t,x) \le C P_t\theta_0(x) + \eta T_\gamma f(t,x), \qquad t>0, \; x \in \RR^2,
\end{align}
for some constants $C,\eta >0$. If $\eta$ is sufficiently small, then there exists a constant $M>0$ such that
$$f(t,x)\le M P_t|\theta_0|(x), \qquad t>0, \; x \in \RR^2.$$
\end{lemma}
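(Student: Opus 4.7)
The plan is to convert the pointwise inequality for $f$ into a scalar Gronwall-type inequality for the sup-ratio $H(t):=\sup_{x\in\RR^2}|f(t,x)|/P_t|\theta_0|(x)$ and solve it by iteration. Without loss of generality I assume $f\ge 0$ (the hypothesis only involves $|f|$ on the right) and $\theta_0\not\equiv 0$, so that $P_t|\theta_0|>0$ everywhere by Lemma~\ref{lem:PtthetaLow}.

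The crucial computation is the action of $T_\gamma$ on the flow $P_\cdot|\theta_0|$. By the semigroup property $P_{t-s}(P_s|\theta_0|)=P_t|\theta_0|$ and the substitution $s=tu$,
\begin{align*}
T_\gamma(P_\cdot|\theta_0|)(t,x)=t^\gamma P_t|\theta_0|(x)\int_0^t s^{-\gamma-\frac{\alpha-1}{\alpha}}(t-s)^{-\frac{1}{\alpha}}\,ds=B\,P_t|\theta_0|(x),
\end{align*}
where $B:=\mathcal{B}\big(\tfrac{1}{\alpha}-\gamma,\,1-\tfrac{1}{\alpha}\big)$ is finite thanks to $\gamma\in(0,1/\alpha)$ and $\alpha>1$.

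Next, inserting the pointwise bound $f(s,y)\le H(s)P_s|\theta_0|(y)$ into the definition of $T_\gamma f$ and applying the identity above yields $T_\gamma f(t,x)\le B\,P_t|\theta_0|(x)\sup_{s\le t}H(s)$. Dividing the hypothesis $f\le CP_t\theta_0+\eta T_\gamma f\le CP_t|\theta_0|+\eta T_\gamma f$ by $P_t|\theta_0|$, taking $\sup_x$, and then the nondecreasing envelope in $t$ produces the scalar inequality
\begin{align*}
\sup_{s\le t}H(s)\le C+\eta B\sup_{s\le t}H(s).
\end{align*}
Choosing $\eta$ so small that $\eta B<1$ immediately delivers $\sup_{s\le t}H(s)\le C/(1-\eta B)=:M$, independently of $t$, which is exactly the conclusion.

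The main obstacle I foresee is verifying finiteness of $\sup_{s\le t}H(s)$ a priori, which legitimizes dividing by $1-\eta B$ in the last step. My plan is to iterate the monotone operator $\mathcal{L}g:=CP_t|\theta_0|+\eta T_\gamma g$: induction together with the semigroup identity above gives $f\le CP_t|\theta_0|\sum_{k=0}^{n-1}(\eta B)^k+\eta^n T_\gamma^n f$ for every $n\ge 1$. The pointwise hypothesis $T_\gamma f(t,x)<\infty$, propagated through finitely many applications of $T_\gamma$, combined with the geometric factor $\eta^n$ and the smallness condition $\eta B<1$, should force the tail $\eta^n T_\gamma^n f$ to be asymptotically negligible as $n\to\infty$; passing to the limit then yields $f\le M\,P_t|\theta_0|$ directly, bypassing any need for prior local boundedness of $H$.
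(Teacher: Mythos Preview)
Your $H$-approach and your iteration both run into the same unverified step, and neither of your proposed fixes closes it. In the $H$-version you correctly need $\sup_{s\le t}H(s)<\infty$ before you can subtract $\eta B\sup_{s\le t}H(s)$ from both sides; nothing in the hypotheses (pointwise finiteness of $T_\gamma f$ and the inequality \eqref{aux4}) gives you that. Your fallback produces
\[
f\le C\sum_{k=0}^{n-1}(\eta B)^k\,P_t|\theta_0|+\eta^n T_\gamma^n f,
\]
but the claim that ``the pointwise hypothesis $T_\gamma f<\infty$, propagated through finitely many applications of $T_\gamma$, \dots should force the tail to be negligible'' is exactly the missing idea: from $T_\gamma f<\infty$ alone you cannot even conclude $T_\gamma^2 f<\infty$ (you would need integrability of $(s,y)\mapsto T_\gamma f(s,y)$ against the kernel, not just pointwise finiteness), let alone any geometric control $T_\gamma^n f\le c^{\,n-1}T_\gamma f$ that would beat $\eta^{-n}$.

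The paper gets around this by working directly with the quantity that \emph{is} known to be finite, namely $T_\gamma f$. Substituting \eqref{aux4} once into the definition of $T_\gamma f$, using the semigroup property and Fubini, one lands on
\[
T_\gamma f(t,x)\le C\mathcal{B}\!\left(\tfrac1\alpha-\gamma,\,1-\tfrac1\alpha\right)P_t|\theta_0|(x)+\eta\, t^\gamma\!\int_0^t u^{-\gamma-\frac{\alpha-1}{\alpha}}P_{t-u}|f|(u,x)\!\int_u^t s^{-\frac{\alpha-1}{\alpha}}\big[(t-s)(s-u)\big]^{-1/\alpha}ds\,du.
\]
The whole point is that the inner $s$-integral is bounded by $c_\gamma (t-u)^{-1/\alpha}$; this is the content of Lemma~\ref{lem:tech}/Corollary~\ref{cor:estint} (after the change of variables $s=tr^\alpha$). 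That bound turns the right-hand side into $CBP_t|\theta_0|+\eta c_\gamma T_\gamma f$, i.e.\ a self-referential inequality for $T_\gamma f$, and since $T_\gamma f<\infty$ one can legitimately subtract to get $T_\gamma f\le \dfrac{CB}{1-\eta c_\gamma}P_t|\theta_0|$ for $\eta c_\gamma<1$. In other words, the step you are missing is precisely the estimate $T_\gamma(T_\gamma f)\le c_\gamma T_\gamma f$, and that is where Corollary~\ref{cor:estint} enters; your argument does not supply any substitute for it.
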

\begin{proof}
%Furthermore, for $\gamma\in (0,1/\alpha)$ we define 
%\begin{align}\label{eq:I_k}I_k(t,x)=t^{\gamma}\int_{0}^ts^{-\gamma-(\alpha-1)/\alpha}(t-s)^{-1/\alpha}\int_{\R^2}  p(t-s,x-y)f_k(s,y)dy\,ds.
%\end{align}
%Note that $T_\gamma f(t,x) \ge T_0 f(t,x)$ for $\gamma>0$ and consequently
%\begin{align}\label{eq:f<P+I}
%f(t,x)\le  C P_t\theta_0(x)+\eta T_\gamma f(t,x).
%\end{align}
 Applying estimate \eqref{aux4} of $f$ to \eqref{eq:T_g}, we get
\begin{align}\notag
T_\gamma f(t,x)&\le  t^{\gamma}\int_{0}^ts^{-\gamma-(\alpha-1)/\alpha}(t-s)^{-1/\alpha}\int_{\R^2}  p(t-s,x-y)\\\notag
&\ \ \ \ \times\(C P_s|\theta_0|(y)+\eta \int_{0}^su^{-(\alpha-1)/\alpha}(s-u)^{-1/\alpha} P_{s-u}|f|(u,y) du\)dy\,ds\\\notag
&=C\mathcal{B}\(1-\gamma-\tfrac{\alpha-1}\alpha, 1-\tfrac{1}{\alpha}\) P_t|\theta_0|(x)\\\notag
&\ \ \ +t^{\gamma}\eta \int_{0}^t\int_{0}^ss^{-\gamma}(su)^{-(\alpha-1)/\alpha}[(t-s)(s-u)]^{-1/\alpha}  P_{t-u} |f|(u,x)\,duds\\\notag
&=C\mathcal{B}\(1-\gamma-\tfrac{\alpha-1}\alpha, 1-\tfrac{1}{\alpha}\)  P_t|\theta_0(x)|\\\label{aux5}
&\ \ \ +\eta t^{\gamma}\int_{0}^tu^{-(\alpha-1)/\alpha}P_{t-u} |f|(u,x)\int_{u}^ts^{-\gamma-(\alpha-1)/\alpha}[(t-s)(s-u)]^{-1/\alpha}  \,dsdu,
\end{align}
where $\mathcal{B}$ is the beta function. Using  Corollary \ref{cor:estint} with $\beta=\gamma\alpha$ and $v=(u/t)^{1/\alpha}$,
we estimate the last inner integral in  \eqref{aux5}  as follows
\begin{align*}
&\int_{u}^ts^{-\gamma-(\alpha-1)/\alpha}[(t-s)(s-u)]^{-1/\alpha}  \,ds\\
&=t^{-\gamma-1/\alpha}\int_{u/t}^1s^{-\gamma-(\alpha-1)/\alpha}[(1-s)(s-\frac ut)]^{-1/\alpha}  \,ds\\
&=t^{-\gamma-1/\alpha}\int_{(u/t)^{1/\alpha}}^1r^{-\gamma \alpha}[(1-r^\alpha)(r^\alpha-\frac ut)]^{-1/\alpha}  \,ds\\
&\le c_\gamma u^{-\gamma}(t-u)^{-1/\alpha}.
\end{align*}
This yields $T_\gamma f(t,x)\le  C\mathcal{B}\(1-\gamma-\tfrac{\alpha-1}\alpha, 1-\tfrac{1}{\alpha}\) P_t|\theta_0(x)|+ \eta c_\gamma T_\gamma f(t,x)$. Now, for $\eta<\frac{1}{c_\gamma}$, we get 
$$T_\gamma f(t,x)\le \frac{C\mathcal{B}\(1-\gamma-\tfrac{\alpha-1}\alpha, 1-\tfrac{1}{\alpha}\)}{1-\eta c_\gamma}P_t|\theta_0|(x),$$
which ends the proof.

\end{proof}

\begin{proof}[\bf Proof of Theorem \ref{thm:gradest}.]
We will use induction with respect to $|\k|$. For $|\k|=0$ the assertion is true by Lemma \ref{lem:uppercrit}. Assume now that (\ref{eq:gradest}) holds for all multi-indices $\k'$ such that $|\k'|\le |\k|-1$ for some multi-index $\k$, $|\k|\geq1$.  We use \eqref{eq:Duhamel2} and, analogously as in (\ref{eq:nablaDuhamel}), we obtain
\begin{align*}
\nabla^\k\theta(t,x)&=\nabla^\k P_t\theta_0(x)+\int_0^{t/2}\int_{\R^2}\(\nabla^\k \nabla p(t-s,x-y)\)\cdot R^\perp\theta(s,y)\theta(s,y)dy\,ds\\\nonumber
&\ \ \ +\sum_{\ki+\kii=\k}c_{\ki,\kii}\int_{t/2}^t\int_{\R^2} \(\nabla p(t-s,x-y)\)\cdot R^\perp\(\nabla^{\ki}\theta(s,y)\)\nabla^{\kii}\theta(s,y)dy\,ds.
\end{align*}
As mentioned in  Introduction,  \eqref{eq:estgradp} implies
\begin{align*}
|\nabla^\k P_t\theta_0(x)|\leq\int_{\RR^2}|\nabla^\k p(t,x-y)\theta_0(y)|dy \lesssim  t^{-\frac{|\k|}\alpha} P_t|\theta_0|(x).
\end{align*} 
Next, by \eqref{eq:estgradp}, Proposition \ref{prop:Rtheta-est}, Lemma \ref{lem:uppercrit} and semigroup property, we get
\begin{align*}
&\left|\int_0^{t/2}\int_{\R^2}\(\nabla^\k \nabla p(t-s,x-y)\)\cdot R^\perp\theta(s,y)\theta(s,y)dy\,ds \right| \\ 
&\lesssim t^{-(|\k|+1)/\alpha}\int_0^{t/2}s^{-(\alpha-1)/\alpha}\int_{\R^2} p(t-s,x-y)P_s|\theta_0|(y)dy\,ds \\
&= c t^{-\frac{|\k|}\alpha} P_t|\theta_0|(x). 
\end{align*}
Hence,  using the induction assumption for $|\kii|\le|\k|-1$ together with  \eqref{eq:estgradp}, \eqref{eq:uppercrit}, \eqref{eq:aux1} and semi-group property of $p(t,x)$, we conclude
\begin{align}\nonumber
|\nabla^\k\theta(t,x)|&\lesssim t^{-\frac{|\k|}\alpha} P_t|\theta_0|(x) +\sum_{\substack{\ki+\kii=\k \\|\kii|\le |\k|-1}} c_{\ki,\kii} t^{-(|\k|+\alpha-1)/\alpha}\int_{t/2}^t(t-s)^{-1/\alpha}\int_{\R^2}  p(t-s,x-y)P_s|\theta_0|(y)dy\,ds\\\nonumber
&\qquad\qquad\qquad\;\; +\int_{t/2}^t\int_{\R^2} \left|\nabla p(t-s,x-y)\cdot R^\perp\theta(s,y)\nabla^{\k}\theta(s,y)\right|dy\,ds \,\\\label{aux2}
&\lesssim t^{-\frac{|\k|}\alpha} P_t|\theta_0|(x)+\int_{t/2}^t(t-s)^{-\frac1\alpha}\int_{\R^2}  p(t-s,x-y)|R^\perp\theta(s,y)||\nabla^{\k}\theta(s,y)|dy\,ds.
\end{align}
Let $\varepsilon>0$, to be fixed later. By \eqref{eq:limRtheta}, there are $t_1,t_2,R>0$ such that $|s^{(\alpha-1)/\alpha}R^\perp\theta(s,y)|<\varepsilon$ for $(s,y)\notin D=(t_1,t_2)\times B(0,R) $. Thus
\begin{align*}
|\nabla^\k\theta(t,x)|
&\le c	 t^{-|k|/\alpha} P_t|\theta_0|(x)+\varepsilon\int_{t/2}^t(t-s)^{-1/\alpha}\int_{\R^2}  p(t-s,x-y)s^{-(\alpha-1)/\alpha}|\nabla^{\k}\theta(s,y)|dy\,ds\\
&\ \ \ +\int_{t_1 \vee t/2}^{t_2 \land t}\int_{B(0,R)} (t-s)^{-1/\alpha}  p(t-s,x-y)s^{-(\alpha-1)/\alpha}|\nabla^{\k}\theta(s,y)|dy\,ds.
\end{align*}
By Lemma \ref{lem:bbPtheta}, the last integral is bounded by $ t^{-|\k|/\alpha} P_t\theta_0(x)$. 
 This gives us 
\begin{align*}
|\nabla^\k\theta(t,x)|\le ct^{-|\k|/\alpha} P_t|\theta_0|(x)+\varepsilon\int_{t/2}^t(t-s)^{-1/\alpha}\int_{\R^2}  p(t-s,x-y)s^{-(\alpha-1)/\alpha}|\nabla^{\k}\theta(s,y)|dy\,ds.
\end{align*}
Now, denote $f_\k(t,x) = t^{|\k|/\alpha}|\nabla^\k\theta(t,x)|$.  Then, for any $\gamma \in (0,1/\alpha)$,
\begin{align*}
f_\k(t,x) &\le c P_t|\theta_0|(x)+ \varepsilon\int_{t/2}^t(t-s)^{-1/\alpha}\int_{\R^2}  p(t-s,x-y)s^{-(\alpha-1)/\alpha} t^{|\k|/\alpha} |\nabla^{\k}\theta(s,y)|dy\,ds \\
&\le c P_t|\theta_0|(x)+ \varepsilon 2^{|\k|/\alpha}\int_{t/2}^t(t-s)^{-1/\alpha} s^{-(\alpha-1)/\alpha}   P_{t-s} f_\k(s,x) \,ds\\
&\le c P_t|\theta_0|(x)+ \varepsilon 2^{|\k|/\alpha} T_\gamma f_\k(t,x),
\end{align*}
where $T_\gamma$ is defined in Lemma \ref{lem:s-ie}. Since $\varepsilon$ may be choosen arbitrary small, by Lemma \ref{lem:s-ie},
$$
|\nabla^\k\theta(t,x)|\le M t^{-|\k|/\alpha} P_t|\theta_0|(x).
$$
The proof is complete.
\end{proof}

%\bibliographystyle{abbrv}
%\bibliography{quasigeo}

\end{document}